\newtheorem{theorem}{Theorem}[section]
\newtheorem{problem}[theorem]{Problem}
\newtheorem{lemma}[theorem]{Lemma}
\newtheorem{claim}[theorem]{Claim}
\begin{document}

\title{The 3-colorability of planar graphs without cycles of length 4, 6 and 9}

\author{Yingli Kang\thanks{
        Fellow of the International Graduate
        School "Dynamic Intelligent Systems";
        Paderborn Institute for Advanced Studies in
		Computer Science and Engineering,
		Paderborn University,
		33102 Paderborn,
		Germany.},
       Ligang Jin\thanks{
       Corresponding author; Supported by Deutsche Forschungsgemeinschaft (DFG) grant STE 792/2-1;
       Paderborn University.}
       Yingqian Wang\thanks{
       supported by National Natural Science Foundation of China (NSFC) 11271335;
Department of Mathematics, Zhejiang Normal
University, 321004 Jinhua, China; 
yingli@mail.upb.de, ligang@mail.upb.de, yqwang@zjnu.cn.}
}

\maketitle

\begin{abstract}
\small In this paper, we prove that planar graphs without cycles of length 4, 6, 9 are 3-colorable.
\end{abstract}

\section{Introduction}
 The well-known Four Color Theorem
states that every planar graph is 4-colorable. On the 3-colorability
of planar graphs, a famous theorem owing to Gr\"{o}tzsch \cite{Grotzsch1959109} states
that every planar graph without cycles of length 3 is 3-colorable.
Therefore, next sufficient conditions that guarantee 3-colorability of planar graphs should always allow the presence of cycles of length 3.
In 1976, Steinberg conjectured that every planar graph without cycles
of length 4 and 5 is 3-colorable. Erd\"{o}s \cite{Steinberg1993211} suggested a relaxation of Steinberg's Conjecture: does there exist a constant $k$ such
that every planar graph without cycles of length from 4 to $k$ is
3-colorable? Abbott and Zhou \cite{AbbottZhou1991203} proved that such a constant exists and
$k\leq 11$. This result was later on improved to $k\leq 9$ by
Borodin \cite{Borodin1996183} and, independently, Sanders and Zhao \cite{SandersZhao199591}, and to
$k\leq 7$ by Borodin, Glebov, Raspaud and Salavatipour \cite{BorodinEtc2005303}.
Besides, much attention was paid to sufficient conditions that forbid cycles of some other certain length.
The results concerning four kinds of forbidden length of cycles were obtained in several different papers and summarized in \cite{LuEtc20094596}:
\begin{theorem}
A planar graph is 3-colorable if it
has no cycle of length $4$, $i$, $j$ and $k$, where $5\leq i<j<k\leq
9$.
\end{theorem}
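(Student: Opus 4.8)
The plan is to argue by the discharging method applied to a minimal counterexample. Suppose the statement fails for some fixed choice of forbidden lengths, and let $G$ be a counterexample with the fewest vertices: $G$ is planar, contains no cycle of length $4$, $i$, $j$ or $k$ with $5 \le i < j < k \le 9$, yet $G$ is not $3$-colorable. Fix a plane embedding. The first task is to collect \emph{reducible configurations}, local structures whose presence would contradict minimality. Routine reductions show that $G$ is $2$-connected with minimum degree at least $3$: a vertex of degree at most $2$ can be deleted, the remaining graph $3$-colored by minimality, and the color of the deleted vertex chosen last, while a cut vertex lets us $3$-color the blocks separately and glue. Since $G$ has no $4$-cycle, no two triangles share an edge, because two triangles on a common edge close a $4$-cycle; hence triangles are edge-disjoint, and more generally the forbidden lengths force triangles and short faces to be well separated.

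The core is Euler's formula. I would assign to each vertex $v$ the charge $\mathrm{ch}(v) = d(v) - 4$ and to each face $f$ the charge $\mathrm{ch}(f) = \ell(f) - 4$, where $\ell(f)$ denotes the length of $f$. Then $\sum_{v} \mathrm{ch}(v) + \sum_{f} \mathrm{ch}(f) = -8$, since $|V| - |E| + |F| = 2$ and $\sum_v d(v) = \sum_f \ell(f) = 2|E|$. Under this assignment only $3$-vertices (charge $-1$) and triangular faces (charge $-1$) are negative; $4$-vertices and, vacuously, $4$-faces are neutral, and all larger vertices and faces carry positive charge. Because $4$-cycles are forbidden there are no $4$-faces, so every nontriangular face has length at least $5$ and charge at least $1$.

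Next I would design discharging rules that move positive charge from large faces and high-degree vertices onto the triangles and $3$-vertices, so that after redistribution every vertex and every face is nonnegative, contradicting the total of $-8$. The natural rules send fixed amounts across each incident triangle--face pair and each small-vertex--big-vertex pair; because forbidding three of the lengths in $\{5,6,7,8,9\}$ restricts how the surviving face lengths can cluster around a triangle, one aims to show that each triangle collects enough charge from its neighboring large faces and each $3$-vertex enough from its incident faces and high-degree neighbors.

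The hard part will be twofold. First, one must assemble a list of reducible configurations strong enough to make the discharging balance: for each choice $(i,j,k)$ the dangerous clusters of triangles and short faces around low-degree vertices have to be excluded, typically by precoloring-extension arguments on small subgraphs. Second, and more delicate, the argument must cover all ten choices of $(i,j,k)$ with $5 \le i < j < k \le 9$; the positions of the forbidden lengths change which face sizes may sit adjacent to a triangle, so one either finds a single discharging scheme robust to the choice or groups the cases into parallel analyses. Verifying that the final charge at every configuration type is nonnegative, especially near triangles bordered by the smallest surviving faces, is where the bulk of the casework will lie.
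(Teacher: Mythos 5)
You have proposed a plan, not a proof, and that is the genuine gap. Everything after your second paragraph is a description of what a proof \emph{would} have to contain: you never exhibit the list of reducible configurations, never state the discharging rules beyond ``the natural rules send fixed amounts,'' and never carry out the nonnegativity verification --- you yourself label these ``the hard part'' and defer them. For a discharging theorem the rules and the case analysis \emph{are} the theorem; the minimal-counterexample setup, the $2$-connectivity and minimum-degree-$3$ reductions, and the charge identity $\sum_v (d(v)-4)+\sum_f (\ell(f)-4)=-8$ that you do supply are the routine five percent. Note also that the paper in question does not prove this statement at all: Theorem 1.1 is quoted as a summary of results obtained across several papers (Xu; Borodin--Glebov--Montassier--Raspaud; Wang--Chen; Lu et al.; Borodin--Glebov--Raspaud), one or more per choice of $(i,j,k)$, so there is no single in-paper argument to compare against; the paper's own contribution is the stronger three-length result that planar graphs without $4$-, $6$-, $9$-cycles are $3$-colorable, which subsumes the cases with $\{6,9\}\subseteq\{i,j,k\}$.

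A second, more technical deficiency: your induction hypothesis is too weak. In this entire line of work (including the present paper's Theorem 1.5) one does not take a minimal counterexample to $3$-colorability alone; one proves the stronger statement that every proper $3$-coloring of a suitable precolored outer cycle (here, a ``good'' $12^-$-cycle) extends to the whole graph. This strengthening is what makes the reducibility arguments go through: the deletions, vertex identifications, and edge identifications used to reduce configurations (cf.\ the paper's Lemma 2.8 and its conditions $(a)$, $(b)$ preventing creation of short or ext-triangular cycles and of special $9$-cycles) must preserve membership in the graph class \emph{and} preserve the precoloring of the boundary, and one needs lemmas controlling splitting paths of the outer cycle. With only ``$G$ is a vertex-minimal non-$3$-colorable graph,'' your precoloring-extension reductions on small subgraphs have no mechanism to recolor consistently with the rest of the graph, and the standard configurations (e.g., faces all of whose vertices are internal $3$-vertices) are not reducible by deletion alone. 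So even as a plan, the skeleton you chose would need to be replaced by the boundary-extension framework before the casework could begin.
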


A more general problem than Steinberg's was formulated also in \cite{LuEtc20094596}:
\begin{problem} \label{pro_forbidden 4 and i}
What is $\cal{A}$, a set of integers between 5 and 9, such that for $i\in \cal{A}$, every planar graph with cycles
of length neither 4 nor $i$ is 3-colorable?
\end{problem}

It seems very far to settle Problem \ref{pro_forbidden 4 and i}, since no element of such a set $\cal{A}$ is found.
Therefore, a reasonable way to deal with this problem is to ask following question:

\begin{problem} \label{pro_forbidden 4 i and j}
What is $\cal{B}$, a set of pairs of integers
$(i,j)$ with $5\leq i<j\leq 9$, such that planar graphs without
cycles of length 4, $i$ and $j$ are 3-colorable?
\end{problem}

The first step towards Problem \ref{pro_forbidden 4 i and j} was made by
Xu \cite{Xu2006958}, who proved that a planar graph is 3-colorable if it has neither 5- and 7-cycles nor adjacent 3-cycles. Unfortunately, there is a gap in his proof, as pointed out by Borodin etc. \cite{BorodinEtc2009668}, who later on gave a new proof of the same statement. Afterwards, Xu \cite{Xu2009347} fixed this gap. Hence $(5,7)\in \cal{B}$.
Other known elements of $\cal{B}$ includes pair (6,8) given by Wang and Chen \cite{WangChen20071552}, pair (7,9) given by Lu etc. \cite{LuEtc20094596}, and pair (6,7) given by Borodin, Glebov and Raspaud \cite{BorodinEtc20102584}. Actually, the theorem proved in \cite{BorodinEtc20102584} states that planar graphs without triangles adjacent to cycles of length from 4 to 7 are 3-colorable, which implies $(6,7)\in \cal{B}$.


In this paper, we show that $(6,9)\in \cal{B}$, that is, we prove the following theorem:

\begin{theorem} \label{thm469}
Every planar graph without cycles of length 4, 6, 9 is 3-colorable.
\end{theorem}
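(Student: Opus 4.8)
The plan is to argue by contradiction via the discharging method, the standard engine behind all the results cited above. Suppose Theorem~\ref{thm469} fails, and let $G$ be a counterexample minimizing $|V(G)|+|E(G)|$; fix a plane embedding of $G$. By minimality $G$ is connected, and a short argument shows it is $2$-connected: a cut vertex or a bridge lets one $3$-color the two smaller pieces separately (each inherits the forbidden-cycle hypothesis) and then glue them after permuting colors. Hence every face boundary is a cycle, and since $G$ has no $4$-, $6$- or $9$-cycle, every face has length $3$, $5$, $7$, $8$ or at least $10$. A vertex of degree at most $2$ is reducible: delete it, $3$-color the smaller graph by minimality, and extend, since a vertex with at most two colored neighbours has a free color. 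Thus $\delta(G)\ge 3$.

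The next block of work is to pin down how triangles sit inside $G$. The key observation is that a $3$-face sharing an edge with a $k$-face creates a cycle of length $k+1$; taking $k\in\{3,5,8\}$ shows that no $3$-face is adjacent to another $3$-face, to a $5$-face, or to an $8$-face. Since $6$- and $9$-faces do not occur at all, every $3$-face is adjacent only to $7$-faces and to faces of length at least $10$. I would push this further to bound how many $3$-faces a single $7$-face can border and in which cyclic positions (two triangles on a $7$-face again tend to create short cycles through their apexes), and to rule out small clusters of triangles joined at $3$-vertices. These ``reducible configuration'' lemmas would be proved by the usual precoloring-extension technique: one exhibits a proper subgraph or an identification of vertices that is $3$-colorable by minimality and whose coloring extends to $G$, contradicting the choice of $G$.

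For the discharging phase I would assign to each vertex and face $x$ the initial charge $\mu(x)=d(x)-4$. By Euler's formula $\sum_x \mu(x) = -8 < 0$, so it suffices to redistribute charge by local rules that preserve the total yet leave every vertex and face with nonnegative charge; that yields the contradiction. The only elements with negative initial charge are $3$-vertices and $3$-faces (each $-1$), while $5$-faces carry $+1$, $7$-faces $+3$, $8$-faces $+4$, $k$-faces $k-4$, and all vertices of degree at least $5$ are already positive. The rules would send charge from the large faces to the $3$-faces they border and to their incident low-degree vertices, calibrated so that a $3$-face extracts a total of $1$ from its two flanking big faces and each $3$-vertex extracts $1$ from its incident faces. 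The structural lemmas guarantee that every $3$-face is flanked by faces of length at least $7$, which carry enough surplus to pay.

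The main obstacle is the discharging accounting around $7$-faces, which are the tightest case: a $7$-face has surplus only $3$, yet it may be asked to feed several adjacent triangles and several incident $3$-vertices at once. Designing rules strong enough to save every $3$-vertex and $3$-face while keeping $7$-faces (and $8$-faces) nonnegative is exactly where the forbidden lengths $4$, $6$ and $9$ must be used most sharply, through the lemmas that bound the number and configuration of triangles around a $7$-face. I expect the bulk of the proof to be the enumeration of these local configurations and the case-by-case verification that the final charge $\mu^\ast(x)\ge 0$ for every vertex and face.
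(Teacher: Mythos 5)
Your overall architecture (minimal counterexample, reducible configurations via precoloring extension, discharging) is the right family of techniques, and several of your preliminary observations (2-connectivity, $\delta\ge 3$, no 3-face adjacent to a 3-, 5- or 8-face) match the paper. But there is a genuine gap at the very start: you take a minimal counterexample to the \emph{bare} coloring statement, and then invoke the ``usual precoloring-extension technique'' for your reducible configurations. Minimality of such a counterexample only gives you 3-colorability of smaller graphs, not extendability of a prescribed boundary coloring, so the technique is not actually available to you. Concretely, your setup is helpless against a separating cycle $C$ of length, say, 10 or 12 with a large interior: you can 3-color $\overline{Ext}(C)$ and $\overline{Int}(C)$ separately by minimality, but the two colorings need not agree on $C$, and nothing in your framework lets you match them. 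This is exactly why the paper does not prove Theorem \ref{thm469} directly but instead proves the stronger Theorem \ref{thm46s9}: for graphs in $\cal{G}$, every proper 3-coloring of a good $12^-$-cycle bounding the exterior face extends to the whole graph. With that strengthening, separating good cycles are killed outright (Lemma \ref{lem_sep good cycle}), and all the deletion/identification reductions go through (Lemma \ref{operation}). The strengthening also reshapes the discharging: the outer face gets charge $|f_0|+4$ so the total is $0$ and a strict surplus is exhibited on $D$, rather than your ``total $-8$, everything nonnegative'' scheme.

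A second, subtler missing idea is closure of the graph class under your reductions. Identifying vertices or edges can create 9-cycles, and in general this cannot be avoided; so a minimal counterexample to the literal ``no 9-cycles'' hypothesis cannot absorb these operations, and your planned identification arguments break down. The paper's fix is to relax the hypothesis: the class $\cal{G}$ forbids 4- and 6-cycles but only \emph{special} 9-cycles (those with a $(3,8)$-chord or $(5,5,5)$-claw), and Lemma \ref{operation} verifies under conditions $(a)$ and $(b)$ that the reductions create no special 9-cycle and keep $D$ good. This relaxation has a downstream cost you have not anticipated: 9-faces now occur and become one of the tight spots of the discharging (handled via Lemma \ref{lem_9-face} and the sets $A(f),B(f),C(f),D(f)$), alongside the light 7-faces you correctly identified (which need the finer rules $R4$ with charges like $\frac{5}{24}$ and $\frac{1}{24}$, plus Lemmas \ref{lem_two light 7-face} and \ref{lem_theta}). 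In short: your plan needs to be rebuilt on a strengthened extension statement over a carefully enlarged class before any of the reducibility or discharging work can stand.
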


The graphs considered in this paper are finite and simple.
Let $G$ be a plane graph and $C$ a cycle of $G$.
By $Int(C)$ (or $Ext(C)$) we denote the subgraph of $G$ induced by the vertices lying inside (or outside) of $C$.
Cycle $C$ is \textit{separating} if both $Int(C)$ and $Ext(C)$ are not empty.
By $\overline{Int}(C)$ (or $\overline{Ext}(C)$) we denote the subgraph of $G$ consisting of $C$ and its interior (or exterior).

Denote by $G[S]$ the subgraph of $G$ induced by $S$, where either $S\subseteq V(G)$ or $S\subseteq E(G)$.
A vertex is a \textit{neighbor} of another vertex if they are adjacent.
A \textit{chord} of $C$ is an edge of $\overline{Int}(C)$ that connects two nonconsecutive vertices on $C$.
If $Int(C)$ has a vertex $v$ with three neighbors $v_1,v_2,v_3$ on $C$, then $G[\{vv_1,vv_2, vv_3\}]$ is called a \textit{claw} of $C$.
If $Int(C)$ has two adjacent vertices $u$ and $v$ such that $u$ has two neighbors $u_1,u_2$ on $C$ and $v$ has two neighbors $v_1,v_2$ on $C$, then $G[\{uv,uu_1,uu_2,vv_1,vv_2\}]$ is called a \textit{biclaw} of $C$.
If $Int(C)$ has three pairwise adjacent vertices $u,v,w$ such that $u,v$ and $w$ have a neighbor $u',v'$ and $w'$ on $C$ respectively, then $G[\{uv,vw,uw,uu',vv',ww'\}]$ is called a \textit{triclaw} of $C$ (see Figure \ref{fig_claw}).

\begin{figure}[h]
  \centering
  \includegraphics[width=13cm]{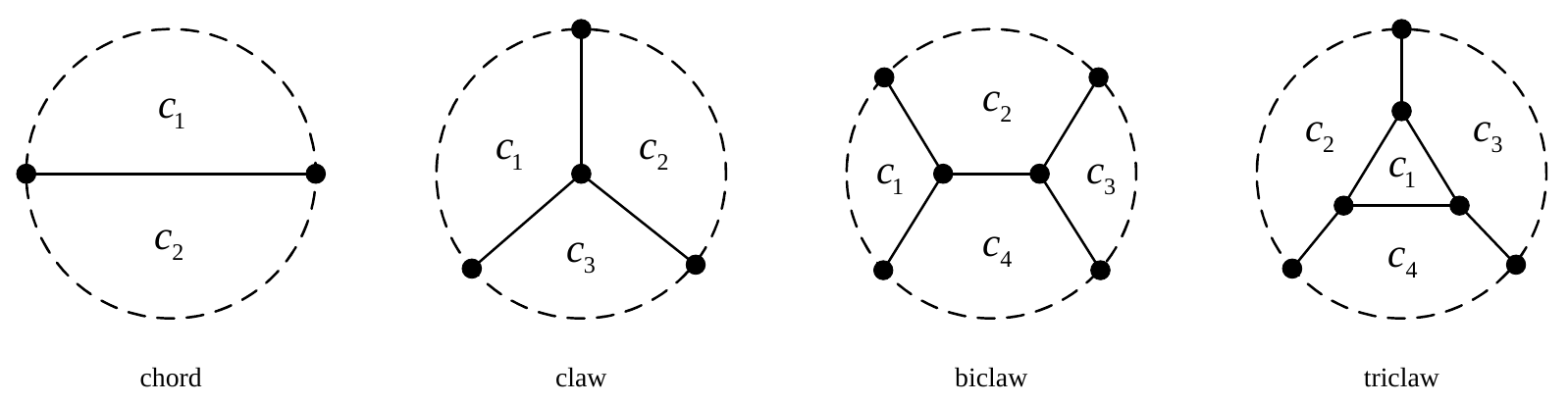}\\
  \caption{chord, claw, biclaw and triclaw of a cycle}\label{fig_claw}
\end{figure}
Let $C$ be a cycle and $T$ be one of the chords, claws, biclaws and triclaws of $C$. We call the graph consisting of $C$ and $T$ a \textit{bad partition} $H$ of $C$. The boundary of any one of the parts, into which $C$ is divided by $H$, is called a \textit{cell} of $H$. Clearly, every cell is a cycle.
In case of confusion, let us always order the cells $c_1,\cdots,c_t$ of $H$ in the way as shown in Figure \ref{fig_claw}.
For every cell $c_i$ of $H$, let $k_i$ be the length of $c_i$. Then $T$ is further called a $(k_1,k_2)$-chord, a $(k_1,k_2,k_3)$-claw, a $(k_1,k_2,k_3,k_4)$-biclaw or a $(k_1,k_2,k_3,k_4)$-triclaw, respectively.

Let $k$ be a positive integer.
A $k$-cycle is a cycle of length $k$. A $k^-$-cycle (or $k^+$-cycle) is a cycle of length at least (or at most) $k$.
A \textit{good cycle} is a 12$^-$-cycle that has none of claws, biclaws and triclaws.
A \textit{bad cycle} is a 12$^-$-cycle that is not good.
We say a 9-cycle is \textit{special} if it has a (3,8)-chord or a (5,5,5)-claw.

Let $\cal{G}$ be the class of connected plane graphs with neither 4- and 6-cycle nor special 9-cycle.

Instead of Theorem \ref{thm469}, it is easier for us to prove the following stronger one:
\begin{theorem} \label{thm46s9}
Let $G\in \cal{G}$. We have
\begin{enumerate}[(1)]
  \item $G$ is 3-colorable; and
  \item If $D$, the boundary of the exterior face of $G$, is a good cycle, then every proper 3-coloring of $G[V(D)]$ can be extended to a proper 3-coloring of $G$.
\end{enumerate}
\end{theorem}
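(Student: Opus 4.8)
The plan is to prove the two assertions simultaneously by contradiction, choosing a counterexample $G\in\mathcal{G}$ that minimizes $|V(G)|$ and, subject to that, $|E(G)|$. Such a $G$ either fails to be $3$-colorable, or carries a good outer boundary $D$ together with a proper $3$-coloring of $G[V(D)]$ that does not extend to $G$. First I would clear away the degeneracies. If $G$ is disconnected or has a cut vertex, one colors the blocks separately and identifies them at the cut vertex, so $G$ may be assumed $2$-connected and $D$ is a cycle; a vertex not on $D$ of degree at most $2$ can be deleted and its color recovered at the end, so every interior vertex has degree at least $3$. These steps also let me tie the two statements together: when $D$ is good, $G[V(D)]$ is the cycle $D$ with non-crossing chords, hence outerplanar and $3$-colorable, so any counterexample to (1) with good $D$ is already a counterexample to (2). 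It therefore suffices to refute (2) for a good $D$ and, separately, to rule out the possibility that $D$ is bad or of length at least $13$.

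The heart of the argument is a splitting reduction built on the bad-partition machinery. I would show that a minimal counterexample contains no short separating cycle carrying a chord, claw, biclaw, or triclaw, and that its outer boundary is a good cycle. In each case, if the configuration $H$ with cells $c_1,\dots,c_t$ were present, one colors the closed regions $\overline{Int}(c_1),\dots,\overline{Int}(c_t)$ successively (together with $\overline{Ext}(C)$ when $C$ is separating), ordering them so that at every stage the boundary of the next piece has already been properly colored on the shared part of $C\cup T$, and then extending inward by the inductive hypothesis (2). This applies \emph{precisely} because every cell $c_i$ was arranged to be a good cycle, i.e. short and free of further claws, biclaws, and triclaws, so that $\overline{Int}(c_i)$ is a smaller member of $\mathcal{G}$ with good outer boundary $c_i$. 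The delicate point — and the reason the notions of \emph{special $9$-cycle} and of the length bound $12$ are calibrated exactly as stated — is to verify that each piece again lies in $\mathcal{G}$, that is, that the splitting introduces no $4$-, $6$-, or special $9$-cycle; the colorings then agree on the shared vertices by construction, contradicting minimality.

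With separating and boundary bad partitions excluded, I would extract the local structure needed for the endgame: minimum degree at least $3$, triangles pairwise non-adjacent (two triangles sharing an edge form a $4$-cycle), no two triangles joined by a short path (which would create a $6$-cycle), and a list of forbidden configurations around $3$-vertices and around triangles near $D$. The contradiction then comes from discharging. Assign to each vertex the charge $d(v)-4$ and to each face the charge $\ell(f)-4$, where $\ell(f)$ denotes the face length; by Euler's formula the total charge is $-8$, with the standard modification isolating the contribution of the outer face $D$. Since $4$- and $6$-cycles are absent, every internal face has length $3$, $5$, $7$, $8$, or at least $9$, so the only negative charges sit on triangles and on $3$-vertices. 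Designing rules that send charge from $5^{+}$-faces and $4^{+}$-vertices to these deficient objects, and using the reducibility results to bound how many deficient objects a single donor must support, one shows that every vertex and face finishes with non-negative charge, contradicting the total $-8$.

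The main obstacle is the discharging analysis together with establishing that it is driven by genuinely reducible configurations. Forbidding $6$-cycles rather than $5$-cycles leaves $5$-faces in play, and these are charge-poor, so the rules must track the adjacencies among triangles, $5$-faces, and $3$-vertices with great care; proving that each candidate configuration is reducible (its coloring extends by induction after a local modification) and that every surviving configuration balances is a long, case-heavy computation, and essentially all of the real work lives here. A secondary but still subtle difficulty is making the splitting reductions airtight: one must enumerate the ways a short cycle can be divided by a chord, claw, biclaw, or triclaw and confirm in every case that the cells are good and that no forbidden cycle is created — exactly the bookkeeping that the definition of a special $9$-cycle is engineered to close.
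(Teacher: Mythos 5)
Your outline reproduces the paper's general skeleton (minimal counterexample, good/bad cycle machinery, discharging), but the step you call the heart of the argument would fail. You propose to eliminate every separating cycle carrying a chord, claw, biclaw or triclaw by coloring the closed cells $\overline{Int}(c_1),\dots,\overline{Int}(c_t)$ successively, "ordering them so that at every stage the boundary of the next piece has already been properly colored." No ordering can make this work: for a claw with center $v$, the vertex $v$ lies on all three cells, and once $\overline{Ext}(C)$ and $C$ are colored, the three neighbors of $v$ on $C$ may already carry three distinct colors, leaving no color for $v$ at all. This is precisely why claws, biclaws and triclaws are the defining obstructions in the notion of a \emph{good} cycle: the paper eliminates only separating \emph{good} cycles (Lemma \ref{lem_sep good cycle}, where the extension is immediate from the induction hypothesis), and it cannot and does not exclude separating bad cycles; instead it constrains their shape (Lemmas \ref{bad cycle} and \ref{lem triangular bad cycle}) and carries them through the rest of the proof. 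So your plan asserts a structural claim that is stronger than what is true and is unprovable by your method.

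The missing idea is the paper's actual reducibility engine, Lemma \ref{operation}: one deletes a set of internal vertices and then \emph{identifies} two vertices or two edges (or adds an edge), under conditions $(a)$ and $(b)$ ensuring the resulting graph stays in $\mathcal{G}$ and that $D$ stays good — verifying, as you correctly anticipate, that no $4$-, $6$- or special $9$-cycle is created. Identification is essential, not optional: it forces two vertices of $G$ to receive equal colors, which is exactly what makes the colorings extend back in the reducible configurations (Lemmas \ref{lem_good path}, \ref{lem_face all 3}, \ref{lem_two light 7-face}, \ref{lem_9-face}); plain deletion or cutting along cycles cannot produce these inequalities. Your proposal never mentions identification and explicitly defers all reducible configurations and all discharging rules, i.e., the entire content of the proof. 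A further, smaller slip: your route from (2) to (1) via "ruling out" a bad or long outer boundary cannot succeed, since nothing prevents a graph in $\mathcal{G}$ from having, say, a $20$-cycle as outer face; the correct link is that a non-$3$-colorable planar graph contains a triangle (otherwise Gr\"{o}tzsch's theorem applies), a triangle in $\mathcal{G}$ is necessarily a good cycle (any chord, claw, biclaw or triclaw of it would create a $4^-$-cycle), and one re-embeds the graph so that this triangle bounds the exterior face and applies (2).
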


This section is concluded with some notations that are used in the next section. Let $G$ be a plane graph.
Denote by $d(v)$ the degree of a vertex $v$, by $|C|$ the length of a cycle $C$ and by $|f|$ the size of a face $f$. Let $k$ be a positive integer.
A \textit{$k$-vertex} is a vertex of degree $k$, and a \textit{$k$-face} is a face of size $k$. A \textit{$k^+$-vertex} (or \textit{$k^-$-vertex}) is a vertex of degree at least (or at most) $k$, and a \textit{$k^+$-face} (or \textit{$k^-$-face}) is a face of size at least (or at most) $k$.
A \emph{$k$-path} is a path that contains $k$ edges.
A $k$-cycle containing vertices $v_1,\ldots,v_k$ in cyclic order is denoted by $[v_1\ldots v_k]$.
Denote by $N(v)$ the set of neighbors of a vertex $v$.
Let $N_H(v)=N(v)\cap V(H)$ whenever $v$ is a vertex of a subgraph $H$ of $G$.
A vertex is \textit{external} if it lies on the exterior face, \textit{internal} otherwise.
A vertex incident with a triangle is called a \textit{triangular vertex}.
We say a vertex is \textit{bad} if it is an internal triangular 3-vertex; \textit{good} otherwise.
A path is a \textit{splitting path} of a cycle $C$ if it has two end-vertices on $C$ and all other vertices inside $C$.
We say a path is \textit{good} if it contains only internal 3-vertices and has an end-edge incident with a triangle.
A cycle or a face $C$ is \textit{triangular} if $C$ is adjacent to a triangle $T$. Furthermore, if $C$ is a cycle and $T\in \overline{Ext}(C)$, then we say $C$ is an \textit{ext-triangular} cycle. A triangular 7-face is \textit{light} if it has no external vertex and every incident nontriangular vertex has degree 3.
\section{Proof of Theorem \ref{thm46s9}}
Suppose to the contrary that Theorem \ref{thm46s9} is false.
From now on, let $G$ be a counterexample to Theorem \ref{thm46s9} with fewest vertices.
Actually, $G$ violates the second conclusion of Theorem \ref{thm46s9}, since conclusion (2) implies conclusion (1).
We still use $D$ to denote the boundary of the exterior face of $G$, and let $\phi$ be a proper 3-coloring of $G[V(D)]$ which cannot be extended to a proper 3-coloring of $G$. Clearly, $D$ is a good cycle. By the minimality of $G$, $D$ has no chord.


\subsection{Structural properties of minimal counterexample $G$}
\begin{lemma} \label{lem_min degree}
Every internal vertex of $G$ has degree at least 3.
\end{lemma}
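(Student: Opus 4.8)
The plan is a standard minimality argument: assume for contradiction that $G$ has an internal vertex $v$ with $d(v)\le 2$, delete it, color the smaller graph using the minimality of $G$, and then put $v$ back. Since $v$ is internal it does not lie on $D$, so deleting it leaves $D$ untouched; because passing to a subgraph can neither create a $4$-, $6$- or special $9$-cycle nor introduce a claw, biclaw or triclaw of $D$, the graph $G-v$ again belongs to $\cal{G}$ and $D$ is still a good cycle bounding its exterior face. Note first that $d(v)=0$ is impossible, as $G$ is connected and contains the cycle $D$, so only $d(v)\in\{1,2\}$ need be treated; in either case $v$ has at most two neighbors, which will be the decisive numerical fact at the end.

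If $G-v$ is connected, then it is a member of $\cal{G}$ with strictly fewer vertices than $G$ whose exterior face is bounded by the good cycle $D$, so by the minimality of $G$ the coloring $\phi$ of $G[V(D)]$ extends to a proper $3$-coloring of $G-v$. The at most two neighbors of $v$ forbid at most two of the three colors, so some color is available for $v$; using it yields a proper $3$-coloring of $G$ extending $\phi$, contradicting the choice of $G$. This disposes of $d(v)=1$ completely (removing a leaf keeps the graph connected) and of $d(v)=2$ whenever $v$ is not a cut vertex.

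The one remaining possibility, and the only real obstacle, is $d(v)=2$ with $G-v$ disconnected. Then $v$ is a cut vertex whose two edges run into two distinct components; since the cycle $D$ avoids $v$ and is connected, it lies entirely in one of them. I would split $G$ along $v$ into $H_1$, the union of the $D$-component with $v$, and $H_2$, the union of the other component with $v$, so that $H_1\cap H_2=\{v\}$ and both are connected proper subgraphs of $G$ lying in $\cal{G}$. Applying part (2) of Theorem \ref{thm46s9} to $H_1$ (which still carries $D$ as its good exterior boundary) extends $\phi$ and in particular assigns $v$ some color $a$; applying part (1) to $H_2$ gives a proper $3$-coloring of $H_2$, which I then compose with a permutation of the three colors so that $v$ receives exactly $a$. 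As $H_1$ and $H_2$ agree on their only common vertex $v$, gluing the two colorings produces a proper $3$-coloring of $G$ extending $\phi$, the final contradiction. The part needing the most care is precisely this cut-vertex analysis: checking that the two split pieces genuinely remain in $\cal{G}$ with the correct boundary data, and that the color forced on $v$ by $H_1$ can always be matched in $H_2$.
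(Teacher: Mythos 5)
Your proposal is correct and takes the same approach as the paper: delete $v$, extend $\phi$ to $G-v$ by the minimality of $G$, then color $v$ avoiding the colors of its at most two neighbors. The paper's proof is a two-line version of this that does not explicitly address the case where $G-v$ is disconnected (recall $\cal{G}$ consists of \emph{connected} graphs); your cut-vertex splitting with a color permutation at $v$ is a careful patch of that small omission, not a different argument.
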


\begin{proof}
 Suppose to the contrary that $G$ has an internal vertex $v$ such that $d(v)\leq 2$. We can extend $\phi$ to $G-v$ by the minimality of $G$, and then to $G$ by coloring $v$ different from its neighbors.
\end{proof}

\begin{lemma}
$G$ is 2-connected and therefore, the boundary of each face of $G$ is a cycle.
\end{lemma}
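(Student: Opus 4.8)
The plan is to argue by contradiction against the minimality of $G$. Since $G$ is connected and has at least three vertices (its outer boundary $D$ is a cycle), if $G$ fails to be 2-connected it must possess a cut vertex $v$. First I would record the key structural fact that the cycle $D$, being itself 2-connected, cannot be separated by a single vertex: $D-v$ lies entirely within one component of $G-v$. Writing the components of $G-v$ as $C_1,\dots,C_m$ with $m\ge 2$ and $D-v\subseteq C_1$, I would set $G_1=G[V(C_1)\cup\{v\}]$ and $G_2=G[V(G)\setminus V(C_1)]$, so that $G_1\cup G_2=G$, $V(G_1)\cap V(G_2)=\{v\}$, both pieces are connected, and each is a proper subgraph with fewer vertices than $G$.

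Next I would check that both pieces inherit membership in $\cal{G}$: they are connected plane subgraphs of $G$, and forbidding 4-cycles, 6-cycles and special 9-cycles is a property preserved under taking subgraphs. Crucially, since every vertex of $G$ lies in $\overline{Int}(D)$ and $G_2-v$ attaches to the rest of $G$ only through $v$, deleting $G_2-v$ does not disturb the outer face: $D$ remains the boundary of the exterior face of $G_1$ and is still a good cycle. Hence $G_1$ satisfies the hypotheses of conclusion (2) of Theorem \ref{thm46s9} and, being smaller than the minimal counterexample, is not itself a counterexample. Therefore the coloring $\phi$ of $G[V(D)]$ extends to a proper 3-coloring $\phi_1$ of $G_1$; in particular $v$ receives a color under $\phi_1$.

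It remains to color $G_2$ compatibly. By minimality, conclusion (1) applies to the smaller graph $G_2\in\cal{G}$, so $G_2$ admits a proper 3-coloring; permuting the three colors turns it into a proper 3-coloring $\phi_2$ of $G_2$ in which $v$ receives exactly the color $\phi_1(v)$. Since $G_1$ and $G_2$ meet only at $v$ and $\phi_1,\phi_2$ agree there, the union $\phi_1\cup\phi_2$ is a proper 3-coloring of $G$ extending $\phi$, contradicting the choice of $G$. This forces $G$ to be 2-connected, and the final clause follows from the standard fact that in a 2-connected plane graph the boundary of every face is a cycle.

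The main obstacle I anticipate is the planarity bookkeeping in the middle step, namely justifying that $D$ genuinely survives as the exterior boundary of $G_1$ so that conclusion (2) is applicable, rather than the coloring itself; the gluing is painless precisely because for $G_2$ we only need the weaker conclusion (1) together with the freedom to permute colors to pin down the single shared vertex $v$.
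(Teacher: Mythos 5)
Your proof is correct and follows essentially the same route as the paper: the paper splits off a pendant block $B$ at a cut vertex $v$ with $B-v$ disjoint from $D$, extends $\phi$ to $G-(B-v)$ by minimality (conclusion (2)), and 3-colors $B$ by conclusion (1) with the colors permuted so that $v$'s color is preserved, which is exactly your $G_1$/$G_2$ decomposition and gluing. Your version merely bundles all components of $G-v$ away from $D$ into a single piece $G_2$ and spells out the bookkeeping (that $\cal{G}$ is subgraph-closed for the forbidden configurations and that $D$ survives as the good outer boundary of $G_1$) that the paper leaves implicit.
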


\begin{proof}
Otherwise, we may assume that $G$ has a pendant block $B$ with cut vertex $v$ such that $B-v$ does not intersect with $D$.
We first extend $\phi$ to $G-(B-v)$, and then 3-color $B$ such that the color assigned to $v$ is unchanged.
\end{proof}

\begin{lemma} \label{lem_sep good cycle}
$G$ has no separating good cycle.
\end{lemma}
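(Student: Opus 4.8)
The plan is a standard two-step ``color the outside, then fill in the inside'' argument, exploiting the two-part form of conclusion (2) of Theorem \ref{thm46s9} and the minimality of $G$. Suppose for contradiction that $G$ has a separating good cycle $C$. Since $C$ is separating, both $Int(C)$ and $Ext(C)$ are nonempty, so both $\overline{Ext}(C)$ and $\overline{Int}(C)$ are proper subgraphs of $G$ with strictly fewer vertices. Each is a subgraph of $G$, so it inherits the absence of $4$-cycles, $6$-cycles and special $9$-cycles, and each is connected; hence both lie in $\mathcal{G}$ and are available for the minimality hypothesis.

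First I would color $\overline{Ext}(C)$. Its exterior face is still bounded by $D$, a good cycle, and $\phi$ is a proper $3$-coloring of $G[V(D)]$. Because $\overline{Ext}(C)$ has fewer vertices than $G$, conclusion (2) applies to it, so $\phi$ extends to a proper $3$-coloring $\psi$ of $\overline{Ext}(C)$. Restricting $\psi$ to $V(C)$ yields in particular a proper $3$-coloring of the cycle $C$.

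Next I would color $\overline{Int}(C)$, whose exterior face is bounded by $C$. Since the claws, biclaws and triclaws of $C$ all lie in $Int(C)$, the cycle $C$ is good in $\overline{Int}(C)$ exactly because it is good in $G$; and $\overline{Int}(C)$ has fewer vertices than $G$. Applying conclusion (2) to $\overline{Int}(C)$ with boundary precoloring $\psi|_{V(C)}$ extends it to a proper $3$-coloring of $\overline{Int}(C)$. The two colorings agree on $V(C)$, so their union is a proper $3$-coloring of $G=\overline{Ext}(C)\cup\overline{Int}(C)$ extending $\phi$, contradicting the choice of $G$.

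The step needing care --- and the main obstacle --- is the transition between the two halves: to feed $\psi|_{V(C)}$ into conclusion (2) for $\overline{Int}(C)$, it must be a proper coloring of the induced graph $\overline{Int}(C)[V(C)]$, not merely of the cycle $C$. As a good cycle is permitted to carry a chord, $\overline{Int}(C)[V(C)]$ may contain a chord of $C$ that is invisible to $\psi$, since it lies inside $C$ and hence not in $\overline{Ext}(C)$; then $\psi$ need not color its endpoints differently. I would resolve this by choosing $C$ to be a separating good cycle with smallest interior and arguing that an interior chord is then impossible: a chord $uv$ splits $C$ into two cells whose lengths sum to $|C|+2\le 14$, at least one cell bounds a nonempty, strictly smaller interior, and I expect the constraints of $\mathcal{G}$ (no $4$-, $6$- or special $9$-cycle) to force that cell to be a smaller separating good cycle, contradicting the minimal choice of $C$. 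Verifying that this reduction always yields a genuinely \emph{good} cell is the delicate point to nail down.
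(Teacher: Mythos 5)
Your two-step decomposition is exactly the paper's argument: extend $\phi$ over the outside by minimality, then use goodness of $C$ and minimality again to fill in the inside. The one substantive difference is how the outer graph is chosen, and it is precisely what makes your last paragraph necessary for you but not for the paper: the paper extends $\phi$ to $G-Int(C)$ (delete only the vertices strictly inside $C$), not to $\overline{Ext}(C)$. The graph $G-Int(C)$ retains every chord of $C$ drawn inside $C$, so the restriction of the extended coloring to $V(C)$ is automatically proper on $(G-Int(C))[V(C)]=G[V(C)]=\overline{Int}(C)[V(C)]$, which is exactly the precoloring hypothesis of conclusion (2) for $\overline{Int}(C)$. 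The chord difficulty you identify is thus a phantom created by working with $\overline{Ext}(C)$, and the entire secondary minimality argument can be deleted.

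If you do keep your repair, note two things. First, the ``delicate point'' you flag is in fact immediate: every vertex of a cell $C_1$ lies on $C$ (the chord's endpoints are on $C$) and $Int(C_1)\subseteq Int(C)$, so any claw, biclaw or triclaw of $C_1$ is literally a claw, biclaw or triclaw of $C$; moreover $|C_1|\leq |C|-1\leq 11$, so each cell is a good cycle for free. Second, your strict-decrease claim has a genuine hole: if the chord cuts off a face, i.e.\ $Int(C_2)=\emptyset$, then $Int(C_1)=Int(C)$, so no cell has a \emph{strictly} smaller nonempty interior and ``smallest interior'' does not terminate. This is fixable by minimizing $|V(\overline{Int}(C))|$ instead (or the pair $(|Int(C)|,|C|)$ lexicographically): the cell $C_1$ with nonempty interior is separating, good, and satisfies $V(\overline{Int}(C_1))\subsetneq V(\overline{Int}(C))$ because the arc of $C$ not on $C_1$ is nonempty. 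So your route can be completed, but at the cost of an argument the paper's choice of $G-Int(C)$ renders unnecessary.
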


\begin{proof}
Suppose to the contrary that $G$ has a separating good cycle $C$. We extend $\phi$ to $G-Int(C)$.
Furthermore, since $C$ is a good cycle, the color of $C$ can be extended to its interior.
\end{proof}

One can easily conclude following three lemmas.

\begin{lemma} \label{lem_cycle of G}
Every $9^-$-cycle of $G$ is facial except that an 8-cycle of $G$ might have a chord, which is a (3,7)- or (5,5)-chord.
\end{lemma}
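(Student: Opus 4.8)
The plan is to show that the only way a $9^-$-cycle can fail to bound a face is to be an $8$-cycle carrying an admissible chord, and that the forbidden lengths together with Lemma \ref{lem_sep good cycle} rule out everything else. First I would fix a $9^-$-cycle $C$; if $C=D$ it bounds the exterior face and we are done, so assume $C\neq D$. Since the outer face lies in $Ext(C)$, we have $Ext(C)\neq\emptyset$, so $C$ is facial precisely when it is chordless and $Int(C)=\emptyset$. Thus it suffices to treat two ways of being non-facial: $C$ has a chord, or $C$ is chordless with $Int(C)\neq\emptyset$ (hence separating).

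For the chord case the argument is pure arithmetic. A chord splits $C$ into two cells of lengths $p$ and $q$ with $p+q=|C|+2$ and $p,q\geq 3$; both cells are cycles of $G$, so $p,q\notin\{4,6\}$. Running through $|C|\in\{3,5,7,8,9\}$ (lengths $4$ and $6$ do not occur in $G$) one finds that no admissible $(p,q)$ exists unless $|C|=8$, where the only survivors are $(3,7)$ and $(5,5)$; the case $|C|=9$ leaves only $(3,8)$, which is excluded because a $(3,8)$-chord makes the $9$-cycle special. This yields exactly the stated exception.

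For the chordless separating case I would invoke the good/bad dichotomy. If $C$ has no claw, biclaw or triclaw then, being a chordless $12^-$-cycle, it is a good cycle, and a separating good cycle contradicts Lemma \ref{lem_sep good cycle}. Hence $C$ must carry a claw, biclaw or triclaw, and the task reduces to producing a forbidden cycle from each such bad partition. Here I would use cell-length bookkeeping via Euler's formula: a claw, biclaw or triclaw of $C$ partitions its interior into cells of lengths $k_1,\ldots,k_t$ summing to $|C|+6$, $|C|+10$ or $|C|+12$ respectively, each $k_i\geq 3$ and $k_i\notin\{4,6\}$. Three elementary fusion moves then dispose of the surviving distributions: two cells of length $3$ sharing an interior edge fuse into a $4$-cycle; a $3$-cell fused with an adjacent $5$-cell across a shared interior edge produces a $6$-cycle (since $3+5-2=6$); and on a $9$-cycle the lone residual claw $(5,5,5)$ is forbidden outright as a special cycle. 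Because the fused cycle is built from arcs of $C$ together with spokes, it is a genuine cycle of $G$, so the contradiction is immediate and does not require the cells themselves to be facial.

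The routine but lengthy enumeration of claw, biclaw and triclaw cell-distributions on cycles of length up to $9$ is where I expect the real work to lie; everything else is immediate from Lemma \ref{lem_sep good cycle} and the definition of a special $9$-cycle. The claw case is short (one checks that $(3,3,k')$ forces adjacent triangles, $(3,5,7)$ yields a $6$-cycle, and $(5,5,5)$ is special), but the biclaw and triclaw cases are the most tedious, since they create four cells and more interior edges and hence many more candidate multisets $\{k_i\}$ to knock out. Each is eliminated by one of the three fusion moves above, or by a cell of forbidden length $4$ or $6$ appearing directly; I would organize the proof by first listing, for each length and each partition type, the admissible multisets and then exhibiting the explicit short forbidden cycle obtained by fusing adjacent cells across a shared interior edge.
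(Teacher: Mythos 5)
Your proposal is correct and matches the paper's (essentially unwritten) argument: the paper states this lemma without proof, as one of three results that ``one can easily conclude,'' and your proof is exactly the intended reasoning --- the chord arithmetic $p+q=|C|+2$ with $p,q\notin\{4,6\}$ (and $(3,8)$ excluded on 9-cycles as special), the appeal to Lemma \ref{lem_sep good cycle} for chordless cycles with nonempty interior, and the enumeration of claw/biclaw/triclaw cell multisets (summing to $|C|+6$, $|C|+10$, $|C|+12$), each eliminated by your three fusion moves ($3{+}3\Rightarrow$ a 4-cycle, $3{+}5\Rightarrow$ a 6-cycle, and the $(5,5,5)$-claw forbidden as a special 9-cycle), which I have checked covers every admissible multiset for $|C|\leq 9$. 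One small caution: your parenthetical ``hence separating'' tacitly assumes a vertex of $G$ lies strictly outside $C$ (the paper defines $Ext(C)$ by vertices, so $C\neq D$ alone does not literally give $Ext(C)\neq\emptyset$ when $V(D)\subseteq V(C)$ and edges of $D$ run outside $C$), but this degenerate case is glossed over at exactly the same level by the paper itself, which throughout identifies good non-$D$ cycles with cycles having empty interior.
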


\begin{lemma} \label{bad cycle}
Let $H\in \cal{G}$. If $C$ is a bad cycle of $H$, then $C$ has length either 11 or 12. Furthermore, if $|C|=11$, then $C$ has a (3,7,7)- or (5,5,7)-claw; if $|C|=12$, then $C$ has a (5,5,8)-claw, a (3,7,5,7)- or (5,5,5,7)-biclaw, or a (3,7,7,7)-triclaw.
\end{lemma}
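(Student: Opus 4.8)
The plan is to prove Lemma~\ref{bad cycle} by a careful case analysis driven entirely by the defining hypothesis that $H\in\mathcal{G}$, i.e.\ $H$ has no $4$-cycle, no $6$-cycle, and no special $9$-cycle (no $(3,8)$-chord and no $(5,5,5)$-claw). The starting point is the definition of a bad cycle: $C$ is a $12^-$-cycle carrying at least one of a claw, biclaw, or triclaw. In each case the cells $c_1,\dots,c_t$ partition $C$, so their lengths satisfy a linear relation of the form $\sum k_i = |C| + (\text{extra edges of } T)$. For a claw the three cell-lengths sum to $|C|+3$ (three spokes counted twice, once per adjacent pair of cells, sharing the center), for a biclaw to $|C|+5$, and for a triclaw to $|C|+6$; I would first derive the exact additive relation for each structure by counting edges, since this is what converts the forbidden-length constraints into arithmetic constraints on the $k_i$.

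Next I would impose the parity and divisibility restrictions coming from $\mathcal{G}$. Since $H$ has no $4$- or $6$-cycle and (by Grötzsch-type reasoning the paper is building toward) the relevant short cycles are triangles, $5$-cycles, $7$-cycles, $8$-cycles, $9$-cycles, etc., each cell $c_i$ is itself a cycle of $H$ and hence $k_i\in\{3,5,7,8,\dots\}\setminus\{4,6\}$. So the combinatorial task reduces to: enumerate all tuples $(k_1,\dots,k_t)$ with each $k_i\notin\{4,6\}$, summing to the prescribed value, with $|C|\le 12$, and then discard every tuple that would force a forbidden global cycle in $H$. In particular a $(3,8)$-chord or $(5,5,5)$-claw is outright excluded by the no-special-$9$-cycle hypothesis, which is exactly why those patterns do not appear in the conclusion. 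I would run this enumeration separately for $|C|=11$ and $|C|=12$, and also verify that $|C|\le 10$ is impossible — for each small $|C|$ one checks that no admissible cell-tuple exists, which is the content of the implicit claim that a bad cycle has length $11$ or $12$.

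The delicate part, and the step I expect to be the main obstacle, is ruling out the tuples that are arithmetically consistent but create a forbidden cycle by \emph{combining} cells rather than using a single cell. For example, two adjacent cells of a claw share a spoke, and their symmetric difference (the cycle obtained by deleting the common spoke) is again a cycle of $H$ whose length is $k_i+k_j-2$; this must also avoid $\{4,6\}$ and must not be a special $9$-cycle. Likewise in a biclaw or triclaw one can form several auxiliary cycles by choosing different subsets of the internal edges, and each imposes a constraint. So the real work is not the raw enumeration but checking, for every surviving candidate, that none of these derived cycles violates membership in $\mathcal{G}$ — e.g.\ a $(5,5,7)$-claw is allowed only after confirming that the three pairwise unions give lengths $5+5-2=8$, $5+7-2=10$, $5+7-2=10$, all admissible, whereas a hypothetical $(5,7,7)$-claw of an $11$-cycle would have to be tested the same way. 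I would organize this as a short table per structure type, and the argument terminates once every admissible tuple has been matched to one of the listed outcomes. Granting Lemma~\ref{lem_cycle of G} (which already pins down the chord structure of short cycles) streamlines the chord-related subcases, since it tells us precisely which chords a given short cycle may carry.
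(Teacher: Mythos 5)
The paper never writes out a proof of this lemma --- it is dispatched with ``one can easily conclude'' --- so your enumeration strategy is precisely the intended argument: cells are cycles of $H$ and hence avoid lengths $4$ and $6$; cycles obtained by merging cells along internal edges must also be admissible; and the no-special-$9$-cycle hypothesis kills the $(3,8)$-chord and $(5,5,5)$-claw patterns. However, the additive relations you state are all wrong, and wrong in a way that would wreck the enumeration if used as written. Every edge of the claw/biclaw/triclaw lies on exactly \emph{two} cells while every edge of $C$ lies on one, so the correct relations are $\sum_i k_i=|C|+6$ for a claw ($3$ internal edges), $\sum_i k_i=|C|+10$ for a biclaw ($5$ internal edges), and $\sum_i k_i=|C|+12$ for a triclaw ($6$ internal edges), not $|C|+3$, $|C|+5$, $|C|+6$. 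Your own test case betrays the slip: a $(5,5,7)$-claw has $5+5+7=17=11+6$, incompatible with your formula $|C|+3=14$; with your constants the $(3,7,7)$- and $(5,5,7)$-claws that the lemma asserts for $|C|=11$ would be ``arithmetically impossible,'' i.e.\ the enumeration would contradict the very statement being proved. (Similarly, your hypothetical $(5,7,7)$-claw of an $11$-cycle is ruled out by arithmetic alone, since $5+7+7=19\neq 17$, so it is not a case requiring the union test.)

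Once the constants are repaired, the ``delicate part'' you identify is indeed where the real work lies, but be aware that pairwise unions of cells do not suffice: several arithmetically admissible tuples survive every pairwise test and are excluded only by cycles formed from \emph{three} cells. The boundary of the union of all cells except a $2$-spoke cell $c_i$ has length $|C|-k_i+4$, and except a $3$-spoke cell it has length $|C|-k_i+6$. For example, a $(5,5,5,5)$-biclaw of a $10$-cycle has all pairwise unions of length $8$ or $12$, yet deleting a $2$-spoke $5$-cell leaves a $9$-cycle in which the opposite center forms a $(5,5,5)$-claw --- special, hence forbidden; a $(3,7,3,7)$-biclaw of a $10$-cycle is likewise killed only because deleting a $3$-spoke $7$-cell leaves a $9$-cycle with a $(3,8)$-chord (the spoke incident to the $3$-cell). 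These are exactly the cases showing that ``no admissible cell-tuple exists for $|C|\leq 10$'' is not a purely arithmetic statement, contrary to what your second paragraph suggests. The same mechanism shows the cell \emph{ordering} in the lemma matters: a biclaw with the $7$ at a $2$-spoke position, i.e.\ $(7,5,5,5)$ read against the ordering of Figure~\ref{fig_claw}, is excluded on a $12$-cycle (its complement is a $9$-cycle with a $(5,5,5)$-claw), while $(5,5,5,7)$ with the $7$ at a $3$-spoke position survives. You did flag auxiliary cycles from ``different subsets of the internal edges,'' so the mechanism is present in your plan; but as submitted the proposal both carries incorrect constants and never executes the finite enumeration, so it is a correct blueprint rather than a proof.
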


\begin{lemma} \label{lem triangular bad cycle}
Every bad cycle $C$ of $G$ is adjacent to at most one triangle. Furthermore, if $C$ is ext-triangular, then $C$ has either a (5,5,7)-claw or a (5,5,5,7)-biclaw.
\end{lemma}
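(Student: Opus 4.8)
The plan is to combine the classification of bad cycles from Lemma~\ref{bad cycle} with the facial structure forced by Lemma~\ref{lem_cycle of G}. First I would fix a bad cycle $C$; by Lemma~\ref{bad cycle} it has length $11$ or $12$ and carries one of six bad partitions $H$ (a claw, biclaw or triclaw). Every cell of $H$ is a cycle of length in $\{3,5,7,8\}$, so by Lemma~\ref{lem_cycle of G} each cell of length $3$, $5$ or $7$ bounds a face, and each $8$-cell either bounds a face or carries a $(3,7)$- or $(5,5)$-chord. Consequently the interior of $C$ is completely tiled by these (sub)faces, so the only internal vertices of $C$ are the $1$, $2$ or $3$ centres of $H$. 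This pins down the local picture and tells us exactly where triangles adjacent to $C$ can sit: an internal triangle must be a $3$-cell of $H$ (or a triangle split off by a $(3,7)$-chord inside an $8$-cell), while an external triangle shares one arc edge of $C$ with a cell $c_i$ and has its apex in $\overline{Ext}(C)$.

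The main tool for external triangles is a \emph{bump} operation. If $T=[xyz]$ lies in $\overline{Ext}(C)$ with $xy$ an edge of $c_i$ and $z$ external, then replacing $xy$ by the path $xzy$ turns $c_i$ into a cycle of length $|c_i|+1$. When $|c_i|\in\{3,5\}$ this produces a forbidden $4$- or $6$-cycle, so no external triangle can sit on a $3$- or $5$-cell. When $|C|=11$ I would instead bump the whole cycle: $\widehat{C}=(C-xy)+xzy$ is a bad $12$-cycle whose unique interior vertex is still the claw centre, so by Lemma~\ref{bad cycle} its claw must be a $(5,5,8)$-claw. Since bumping a $7$-cell of a $(3,7,7)$-claw yields cells $\{3,7,8\}\neq\{5,5,8\}$, this rules out external triangles on a $(3,7,7)$-claw, whereas bumping the $7$-cell of a $(5,5,7)$-claw legitimately gives $\{5,5,8\}$. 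Thus an $11$-cycle can be external-triangular only as a $(5,5,7)$-claw, carrying its triangle on the $7$-cell.

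For part~(1), each of the types $(3,7,7)$-claw, $(3,7,5,7)$-biclaw and $(3,7,7,7)$-triclaw already contains one internal triangle, namely its $3$-cell, so to bound the number of adjacent triangles by one I must show these types admit no further triangle, and that the remaining types admit at most one. Together with part~(2), the three types with a $3$-cell are then automatically \emph{not} external-triangular, since an external triangle would be a second triangle. This reduces everything to excluding external triangles on the $7$- and $8$-cells of the $12$-cycle types $(5,5,8)$-claw, $(3,7,5,7)$-biclaw and $(3,7,7,7)$-triclaw, and to showing that a single $7$-cell carries at most one external triangle; the two surviving cases, $(5,5,7)$-claw and $(5,5,5,7)$-biclaw, match the statement.

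The hard part is exactly these $12$-cycle configurations, where the whole-cycle bump gives a $13$-cycle and hence no immediate contradiction. Here I expect to build an auxiliary cycle by merging the bumped cell with one or two neighbouring cells across their shared legs or the central edge $uv$ (resp.\ the edges $uv,vw,uw$ of a triclaw), choosing the merge so that a biclaw or triclaw centre becomes enclosed; the resulting cycle of length at most $12$ should be forced to be a separating good cycle, contradicting Lemma~\ref{lem_sep good cycle}, or a special $9$-cycle, forbidden since $G\in\mathcal{G}$. In the $(5,5,8)$-claw case I would also exploit a $(5,5)$- or $(3,7)$-chord of the $8$-cell to expose a forbidden $6$-cycle or a special $9$-cycle, and treat the chordless $8$-cell separately. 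Making each auxiliary cycle land on a genuine contradiction, and checking that no two external triangles can coexist on one $7$-cell, is where the real case analysis lies.
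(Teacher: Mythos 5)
The paper never writes a proof of this lemma: it is one of the three statements introduced by ``One can easily conclude following three lemmas,'' so there is no printed argument to match yours against, and your proposal has to be judged on its own merits. On those merits it is essentially right, and the two parts you did execute are correct. The tiling observation (cells of the bad partition are faces by Lemma~\ref{lem_cycle of G}, except a possibly chorded $8$-cell, so the only interior vertices are the $1$, $2$ or $3$ centres) is sound, the bump on a $3$- or $5$-cell producing a forbidden $4$- or $6$-cycle is correct, and your whole-cycle bump for $|C|=11$ is a complete and rather elegant argument: $\widehat{C}$ is a bad $12$-cycle whose unique interior vertex is the claw centre (of degree exactly $3$, since its cells are faces), so its only candidate bad partition is the $(3,7,8)$-claw, which is not on the list in Lemma~\ref{bad cycle} --- contradiction for the $(3,7,7)$-type, consistency for the $(5,5,7)$-type, exactly as the statement demands.

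The one substantive criticism is that you stopped short of the $12$-cycle cases and only ``expect'' the merges to land, when in fact they land immediately and uniformly: merge the bumped cell with the adjacent $3$-cell across the shared leg (leg $uu_1$ for the $(3,7,5,7)$-biclaw, edges $vw,wu$ with the central triangle for the $(3,7,7,7)$-triclaw), and the resulting cycle has length $8+3-2=9$ with the shared leg (resp.\ $uv$) as a $(3,8)$-chord --- a special $9$-cycle, forbidden since $G\in\mathcal{G}$; no appeal to Lemma~\ref{lem_sep good cycle} is needed anywhere. Your treatment of the $(5,5,8)$-claw is also overcomplicated: bumping the $8$-cell directly yields a $9$-cycle in which the bumped edge $xy$ is itself a $(3,8)$-chord, so you get a special $9$-cycle with no case split on whether the $8$-cell carries a $(5,5)$- or $(3,7)$-chord. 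The same mechanism disposes of two external triangles on one $7$-cell (a twice-bumped $7$-cell is a $9$-cycle with a $(3,8)$-chord). Two small inaccuracies to repair: the $3$-cell of a $(3,7,7,7)$-triclaw is the central triangle $[uvw]$, which shares no edge with $C$ and so is \emph{not} adjacent to $C$ --- for that type ``at most one triangle'' is trivial once external ones are excluded, rather than the $3$-cell counting as the one allowed triangle; and your bump $\widehat{C}=(C-xy)+xzy$ silently assumes the apex $z\notin V(C)$, so the degenerate case where the external triangle's apex lies on $C$ (making $xz$ or $yz$ an exterior chord of $C$) needs a separate one-line disposal. With these repairs your outline becomes a complete proof, arguably more explicit than anything the authors intended by ``easily concluded.''
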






\begin{lemma}\label{lem_splitting path}
Let $P$ be a splitting path of $D$ which divides $D$ into two cycles $D'$ and $D''$.
\begin{enumerate}[(1)]
  \item If $|P|=2$, then there is a 3-face between $D'$ and $D''$;
  \item If $|P|=3$, then there is a 5-face between $D'$ and $D''$;
  \item If $|P|=4$, then there is a 5- or 7-face between $D'$ and $D''$;
  \item If $|P|=5$, then there is a $9^-$-cycle between $D'$ and $D''$.
\end{enumerate}
\end{lemma}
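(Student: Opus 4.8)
The plan is to analyze the subgraph enclosed by the splitting path $P$ together with one of the two arcs of $D$, and to argue via the absence of short cycles that the relevant face or cycle separating $D'$ from $D''$ must have one of the stated sizes. The key observation is that $P$ has both endpoints on $D$ and all interior vertices strictly inside $D$, so $P$ genuinely separates $\overline{Int}(D)$ into the two regions bounded by $D'$ and $D''$. I would focus attention on the faces of $G$ incident with the interior edges of $P$ and lying between $D'$ and $D''$, and count the contribution of $P$ to the boundary of whichever face $f$ sits directly against $P$ on (say) the $D'$-side.

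\medskip

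First I would treat each case according to $|P|$. For $|P|=2$, the face $f$ between $D'$ and $D''$ incident with $P$ is bounded by a cycle containing the two edges of $P$; since $G$ has no $4$-cycle, $f$ cannot be a $4$-face, and one shows directly that $f$ must be a $3$-face (the two edges of $P$ plus the single edge joining the two endpoints of $P$ on $D$, which is permissible because $D$ is chordless only as a cycle of $G$, not as a cycle of the relevant subgraph). The same template drives the other cases: for $|P|=3$ the candidate face contains a $3$-path, so its boundary cycle has length at least $4$; ruling out $4$ (no $4$-cycles) and $6$ (no $6$-cycles) forces the size up, and I would show it lands on $5$. For $|P|=4$ and $|P|=5$ the argument is analogous but must additionally forbid the values excluded by $\mathcal{G}$, namely $4$, $6$, and (for the $9$-cycle case) that any $9$-cycle produced is non-special, so that Lemma \ref{lem_cycle of G} and the structural facts about $\mathcal{G}$ apply.

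\medskip

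The main technical device I expect to use is Lemma \ref{lem_cycle of G}, which tells us that every $9^-$-cycle of $G$ is facial apart from a possibly-chorded $8$-cycle. This lets me convert the statement ``there is a face of such-and-such size'' into ``there is a $9^-$-cycle of that size,'' and conversely to exclude impossible intermediate sizes by invoking the forbidden cycle lengths $4$ and $6$. For each case I would build a cycle $Z$ consisting of $P$ together with a minimal arc of $D$ or a suitable chord-free completion, bound $|Z|$ from above using the length of $P$, and then appeal to the prohibition on $4$- and $6$-cycles to pin down the admissible residues. The separating-good-cycle lemma (Lemma \ref{lem_sep good cycle}) may also be invoked to rule out configurations that would otherwise create an unwanted separating good cycle, thereby forcing the face between $D'$ and $D''$ to have the claimed size.

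\medskip

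The hard part will be the $|P|=4$ and $|P|=5$ cases, where several face sizes are a priori possible and one must carefully exclude the intermediate values while simultaneously ensuring that the $9$-cycle produced in part (4) is not special (otherwise it would violate membership in $\mathcal{G}$). This requires tracking not just the length of the enclosed cycle but also its chord and claw structure, and arguing that a short enclosed cycle together with the long arc of $D$ would yield either a forbidden cycle length, a separating good cycle contradicting Lemma \ref{lem_sep good cycle}, or a special $9$-cycle, all of which are impossible in $G$. The bookkeeping of which subcycles can arise, and verifying that no forbidden or special configuration is forced, is where the bulk of the case analysis lies.
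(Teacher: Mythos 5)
Your proposal misses the two ingredients that actually make this lemma work, and the local argument you sketch would fail. First, nowhere do you use that $D$ is a good cycle and hence $|D|\leq 12$; the paper's proof of every case begins with the counting identity $|D'|+|D''|=|D|+2|P|\leq 12+2|P|$, and without this global bound the conclusion is simply unprovable: the absence of $4$- and $6$-cycles does nothing to stop both $D'$ and $D''$ from being, say, $10$-cycles, so your plan of ``ruling out the forbidden sizes to force the face size'' cannot land on $3$, $5$, or $7$ --- there is no mechanism in your sketch to exclude sizes $7, 8, 10, \ldots$ for the region against $P$. Note also that the lemma's conclusion concerns $D'$ and $D''$ themselves (one of them must be short), not some third face wedged against $P$; the face directly incident with $P$ need not contain all of $P$, precisely because interior vertices of $P$ have degree at least $3$ (Lemma \ref{lem_min degree}) and their extra neighbors can split the region.

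Second, those extra neighbors are the engine of the paper's proof, and your proposal never engages with them. Arguing by contradiction that both $|D'|$ and $|D''|$ exceed the claimed size, the paper takes a neighbor $y'$ (resp.\ $x'$, $w'$, \dots) of an interior vertex of $P$ off $P$: if it is external, $D$ acquires a chord, claw, biclaw or triclaw, contradicting that $D$ is good; if it is internal, the side containing it becomes a \emph{separating} cycle, hence bad by Lemma \ref{lem_sep good cycle}, hence of length $11$ or $12$ with the very specific claw/biclaw/triclaw structure of Lemma \ref{bad cycle} --- and combined with the sum bound this forces the other side to be short, after which the forbidden $4$-, $6$-cycles and special $9$-cycles finish each case. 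You mention Lemma \ref{lem_sep good cycle} only as something that ``may be invoked,'' but it is the central pivot, together with the classification in Lemma \ref{bad cycle}, which your sketch never cites. Finally, your parenthetical for $|P|=2$ --- that the edge joining the endpoints of $P$ is ``permissible because $D$ is chordless only as a cycle of $G$'' --- is wrong: the paper establishes that $D$ has no chord in $G$, so such an edge exists only when the endpoints of $P$ are consecutive on $D$, which is exactly the conclusion (one of $D'$, $D''$ is a triangle) and must be derived, not assumed.
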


\begin{proof}
Since $D$ has length at most 12, we have $|D'|+|D''|=|D|+2|P|\leq 12+2|P|$.
Recall that every $7^-$-cycle of $G$ is a facial cycle by Lemma \ref{lem_cycle of G}.

(1) ~Let $P=xyz$. Suppose to the contrary that $|D'|, |D''| \geq 5$.
By Lemma \ref{lem_min degree}, $y$ has a neighbor other than $x$ and $z$, say $y'$. It follows that $y'$ is internal since otherwise $D$ is a bad cycle with a claw. Without loss of generality, let $y'$ lie inside $D'$. Thus $|D'|\geq 11$ by Lemma \ref{lem_sep good cycle}. Since $|D'|+|D''|\leq 16$, we have $|D'|=11$ and $|D''|=5$. Now $D'$ has a claw by Lemma \ref{bad cycle}, which implies that $D$ has a biclaw, a contradiction.

(2) ~Let $P=wxyz$. Suppose to the contrary that $|D'|, |D''| \geq 7$.
Let $x'$ and $y'$ be a neighbor of $x$ and $y$ not on $P$, respectively.
If both $x'$ and $y'$ are external, then $D$ has a biclaw.
Hence, we may assume $x'$ lies inside $D'$.
By Lemma \ref{bad cycle} and inequality $|D'|+|D''|\leq 18$, we have $|D'|=11$ and $|D''|$=7.
Thus $D'$ has a claw which divides $D'$ into three faces. Since $D''$ is facial, $y'$ can only coincide with $x'$. Now $D$ has a triclaw.

(3) ~Let $P=vwxyz$. Suppose to the contrary that $|D'|, |D''| \geq 8$.
Since $|D'|+|D''|\leq 20$, we have $|D'|, |D''| \leq 12$.
If $G$ has an edge $e$ connecting two nonconsecutive vertices on $P$, then $e$ together with $P$ can form only a triangle.
Without loss of generality, let $e=wy$ and $e$ belongs to $\overline{Int}(D')$. Now path $vwyz$ is a splitting 3-path of $D$ and hence $D'$ is a 6-cycle with a (3,5)-chord, a contradiction.
Therefore, no pair of nonconsecutive vertices on $P$ are adjacent.

Let $w', x', y'$ be a neighbor of $w, x, y$ not on $P$, respectively.
If $x'$ is external, say $xx'$ is a chord of $D'$, then both of paths $vwxx'$ and $x'xyz$ are splitting 3-paths of $D$. It follows that $D'$ is an 8-cycle with a (5,5)-chord $xx'$.
Hence $y'$ has no other possibility but to lie inside of $D''$, and so does $w'$.
By noticing that $w'$ cannot coincide with $y'$, we know $D''$ is a bad 12-cycle. It follows that $G$ has an edge connecting $w'$ and $y'$, which yields a special 9-cycle of $G$.
Therefore, vertex $x'$ is internal.

We may assume $x'$ lies inside of $D'$.
Thus $D'$ is a bad 11- or 12-cycle, which implies $D''$ has length 8 or 9.
If $|D''|=9$, then $D''$ is facial and $D'$ is a bad 11-cycle with a claw, which is impossible because of the locations of $w'$ and $y'$. Hence we may assume $|D''|=8$. It follows that not both $w'$ and $y'$ lie in $\overline{Int}(D'')$ and that $w',x',y'$ are pairwise distinct. Now $G$ has a 4-cycle that is either $[wxx'w']$ or $[xyy'x']$, a contradiction.

(4) ~Let $P=uvwxyz$. Suppose to the contrary that $|D'|, |D''| \geq 10$.
Since $|D'|+|D''|\leq 22$, we have $|D'|, |D''| \leq 12$.
By similar argument as in (3), one can conclude that $G$ has no edge connecting two nonconsecutive vertices on $P$.
Let $v', w', x', y'$ be a neighbor of $v, w, x, y$ not on $P$, respectively.

We claim that both vertices $w'$ and $x'$ are internal. Otherwise, let $ww'$ be a chord of $D'$. Since both $uvww'$ and $w'wxyz$ are splitting paths of $D$, $D'$ is a 10-cycle with a (5,7)-chord $ww'$. Thus all of $v',x'$ and $y'$ belong to $\overline{Int}(D'')$. If $x'$ is external, then similarly, $D''$ is a 10-cycle with a (5,7)-chord $xx'$, which is impossible because of the location of $y'$. Hence, we may assume that $x'$ lies inside $D''$. Furthermore, $v'$ also lies inside $D''$, since otherwise $G$ has 3-face $[uvv']$ adjacent to a 5-face. Clearly, $v'\neq x'$. Hence $D''$ is a bad 12-cycle containing two adjacent vertices $v'$ and $x'$ inside. A contradiction is obtained by noticing both the location of $y'$ and the specific interior of $D''$.

Let $w'$ lie inside $D'$. If $x'$ lies inside $D''$, then both $D'$ and $D''$ are bad 11-cycles.
It follows that $v'=w', y'=x'$, and both $D'$ and $D''$ have a $(3,7,7)$-claw, yielding special 9-cycles of $G$.
Hence, we may assume that $x'$ lies also inside $D'$. It follows that $x'$ coincide with $w'$, since otherwise the adjacency of $x'$ and $w'$ gives a 4-cycle of $G$.
Thus $D'$ is a bad cycle with either a $(3,7,7)$-claw or a (3,7,5,7)-biclaw, which implies both $v'$ and $y'$ belong to $\overline{Int}(D'')$.
If $v'$ lies on $D''$, then $G$ has triangle $[uvv']$ adjacent to an 8-cycle of $D'$, a contradiction.
Hence we may assume $v'$ lies inside $D''$ and so does $y'$.
It follows that either $v'= y'$ or $v'y'\in E(G)$, yielding 6-cycles of $G$ in both cases.

The proof of this lemma is completed.
\end{proof}

\begin{lemma} \label{operation}
Let $G'$ be a plane graph obtained from $G$ by a graph operation $T$.

Let $T$ consist of deleting a nonempty set of internal vertices and either identifying two vertices or
adding an edge between two nonadjacent vertices.
If after $T$ we\\
$(a)$~ identify no two vertices on $D$, and create no edge connecting two vertices on $D$, and\\
$(b)$~ create neither $6^-$-cycle nor ext-triangular 7- or 8-cycle,\\
then $\phi$ can be extended to $G'$.

Let $T$ consist of deleting a nonempty set $S$ of internal vertices and identifying
two edges $u_1u_2$ and $v_1v_2$ so that $u_1$ is identified with $v_1$.
For $i\in \{1,2\}$, let $T_i$ denote the operation on $G$ that consists of deleting all vertices in $S$ and identifying $u_i$ and $v_i$.
If at least one of $u_1u_2$ and $v_1v_2$ is contained in no $8^-$-cycle of $G-S$, and if conditions $(a)$ and $(b)$ above hold for both $T_1$ and $T_2$,
then $\phi$ can be extended to $G'$.
\end{lemma}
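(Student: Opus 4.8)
The plan is to treat both statements uniformly: show that $G'$ is a member of $\mathcal{G}$ with strictly fewer vertices than $G$, whose outer boundary is still the good cycle $D$ carrying the coloring $\phi$, and then quote the minimality of $G$ to extend $\phi$ over $G'$. Since the deleted set is nonempty, $|V(G')|<|V(G)|$, and one checks immediately (from the $2$-connectivity of $G$ and the fact that $T$ is carried out in the plane) that $G'$ is connected and plane. Condition $(a)$ guarantees that $D$, its edge set, and the values $\phi|_{V(D)}$ are untouched, so $D$ is still the boundary of the outer face and $\phi$ is still a proper coloring of $G'[V(D)]$. Everything therefore hinges on showing $G'\in\mathcal{G}$ and that $D$ is still good in $G'$.

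For the first statement, any cycle of $G'$ is either a cycle of $G$---and hence not a $4$-, $6$-, or special $9$-cycle because $G\in\mathcal{G}$---or a cycle created by the added edge/identification of $T$. I would rule out each forbidden possibility using $(b)$: a new $4$- or $6$-cycle is a created $6^-$-cycle; a new $(5,5,5)$-claw of a $9$-cycle has three created $5$-cells; a new $(3,8)$-chord gives either a created triangle or, when the triangle is old, a created ext-triangular $8$-cycle; and a new claw, biclaw, or triclaw of $D$ is impossible because, by Lemma \ref{bad cycle} and $|D|\le 12$, each such configuration has a cell of length $3$ or $5$, which would be a created $6^-$-cycle. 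Thus $(b)$ forces $G'\in\mathcal{G}$ with $D$ still good, and minimality yields the extension.

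For the second statement the same mechanism applies once the cycles created by the double identification are understood; I would assume without loss of generality that $u_1u_2$ lies in no $8^-$-cycle of $G-S$. Writing $x=u_1=v_1$, $y=u_2=v_2$ and $e$ for the merged edge, every new cycle $C'$ of $G'$ lifts, by undoing one or both identifications, to one of: a cycle of $G-S$ (a cycle of $G$, harmless); a $u_1$--$v_1$ path closed up, i.e.\ a cycle created by $T_1$; a $u_2$--$v_2$ path closed up, created by $T_2$; or a cycle through $e$. The first three are controlled exactly as before by $G-S\subseteq G$ and by $(b)$ for $T_1$ and $T_2$, while $(a)$ for $T_1,T_2$ again fixes $D$ and $\phi$ and, since neither $\{u_1,v_1\}$ nor $\{u_2,v_2\}$ is a pair on $D$ and no $D$--$D$ edge is created, prevents $e$ from joining two vertices of $D$. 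A cycle through $e$ is the crux: pairing $e$ with its lifted path either reproduces a genuine cycle through the edge $u_1u_2$ in $G-S$, of length $\ge 9$ by the $8^-$-hypothesis, or---in the mixed and both-identification cases---reinterprets through $T_1$ or $T_2$ as a $T_i$-created cycle that also carries this long edge-cycle, forcing length $\ge 10$. Hence no through-$e$ cycle is a $4$-, $6$-, or special $9$-cycle, so $G'\in\mathcal{G}$ with $D$ good, and minimality finishes.

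I expect the main obstacle to be precisely the length bookkeeping for the through-$e$ and both-identification cycles in the second statement: one must verify that every short cycle routed through $e$ either reuses the edge $u_1u_2$ (and so is long by hypothesis) or closes up a $u_1$--$v_1$ or $u_2$--$v_2$ path already handled by $(b)$, and in particular that a ``path of length one'' on the complementary side can only be the edge $e$ itself and is therefore excluded. Confirming that the two identifications cannot cooperate to manufacture a $6^-$- or special $9$-cycle is where the argument is most delicate, and it is exactly this interaction that forces the hypothesis to be stated in terms of $T_1$ and $T_2$ together with the single no-short-cycle condition on $u_1u_2$.
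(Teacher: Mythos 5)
Your overall strategy coincides with the paper's (show $G'\in\mathcal{G}$ with $D$ still good on the outer face, then invoke minimality), but there is a genuine gap in your preservation-of-goodness step. You argue that any new claw, biclaw or triclaw of $D$ is impossible because ``each such configuration has a cell of length 3 or 5, which would be a created $6^-$-cycle.'' This is false: not every cell of a newly formed bad partition is created --- only those cells at the new vertex $t'$ whose two incident partition edges lift to different originals $t_1,t_2$. In a $(3,7,7)$-claw of an 11-cycle $D$ (and likewise the $(3,7,5,7)$-biclaw and $(3,7,7,7)$-triclaw from Lemma \ref{bad cycle}), the 3-cell can be an old triangle of $G$, with only the two 7-cells created; then no $6^-$-cycle is created at all, and the configuration is excluded only because each created 7-cell is an \emph{ext-triangular 7-cycle} --- the clause of $(b)$ you never invoke for this purpose, and the very reason the lemma's hypothesis mentions ext-triangular 7-cycles. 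The paper's proof handles this by locating $t'$ in the bad partition $H$, showing $N_H(t')$ must split between $t_1$ and $t_2$, and concluding that a 3- or 5-cycle or an ext-triangular 7-cycle is created, contradicting $(b)$. The same imprecision appears in your special-9-cycle analysis: ``three created 5-cells'' for a new $(5,5,5)$-claw is unjustified (though harmless there, since any single created cell is a 5-cycle), and you should note that the created 8-cell case needs the triangle old/new dichotomy you correctly use for the chord.

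In the edge-identification part the crux is also not where you place it. A genuinely new special 9-cycle can arise with \emph{no created cell at all}: when the merged edge $w_1w_2$ itself serves as the $(3,8)$-chord, the 9-cycle lifts to a 2-path between $u_1,u_2$ together with a 7-path between $v_1,v_2$ --- each piece old, the combination new. The contradiction is not a length bound on the through-$e$ cycle (your ``$\ge 9$/$\ge 10$'' bookkeeping gives nothing here, as the cycle has length exactly 9), but that closing the 2-path with $u_1u_2$ and the 7-path with $v_1v_2$ places \emph{both} edges on $8^-$-cycles of $G-S$, contradicting the hypothesis that at least one of them lies on none; the paper kills created $6^-$-cycles by the same device (both edges would lie on $5^-$-cycles). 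Note also that the hypothesis only protects one of the two edges, so a short cycle of $G'$ through $e$ may well lift to a short old cycle through $v_1v_2$; this is harmless only because $G\in\mathcal{G}$ already excludes 4- and 6-cycles, a point your sketch leaves implicit. Finally, you do not treat goodness of $D$ in this part at all; the paper needs a separate argument (each cell of a bad partition of $D$ shares more than one edge with the other cells, so a created cell must be the long cell of a $(5,5,7)$- or $(5,5,8)$-claw or $(5,5,5,7)$-biclaw, and since $D$ is unchanged and no $6^-$-cycle is created, it cannot be the only created cell). Your closing paragraph concedes exactly this interaction is unresolved, so as it stands the proposal does not close the proof.
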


\begin{proof}
First let $T$ consist of deleting a nonempty set of internal vertices and identifying two other vertices $t_1$ and $t_2$.
Let $t'$ denote the vertex obtained from $t_1$ and $t_2$ after $T$.
Conditions $(a)$ and $(b)$ implies (i) to show $G'\in \cal{G}$, it suffices to show $G'$ has no special 9-cycles; and (ii) $D$ bounds $G'$ and $\phi$ is a proper 3-coloring of $G'[V(D)]$.
Therefore, $\phi$ can be extended to $G'$ by the minimality of $G$ if we can show both that $G'$ has no special 9-cycles and that $D$ is good in $G'$.

Suppose $G'$ has a special 9-cycle $C$. Let $H$ be a bad partition of $C$. We have $t'\in V(H)$ since otherwise $C$ is a special 9-cycle in $G$. Condition $(b)$ implies that every vertex of $N_H(t')$ is adjacent to precisely one of $t_1$ and $t_2$ in $G$.
If all the vertices of $N_H(t')$ is adjacent to $t_1$, then $C$ is a special 9-cycle in $G$.
Hence, we may assume that $N_H(t')$ has a vertex adjacent to $t_2$ and similarly, has another vertex adjacent to $t_1$.
Thus after $T$ a cell of $H$ containing $t'$ is created, that is, we have created a 3- or 5-cycle or an ext-triangular 8-cycle, contradicting $(b)$. Therefore, $G'$ has no special 9-cycle.

Suppose $D$ is bad in $G'$. Let $H$ be a bad partition of $D$. We have $t'\in V(H)$ since otherwise $D$ is bad in $G$.
If $t'$ has degree 2 in $H$, then $t_1, t_2 \in V(D)$ since otherwise $D$ is bad in $G$.
Now we identify two vertices on $D$, contradicting $(a)$.
Hence $t'$ has degree 3 in $H$. Similarly as paragraph above, we may assume that $N_H(t')$ has a vertex $w_1$ adjacent to $t_1$ and two other vertices $w'_2, w''_2$ adjacent to $t_2$ in $G$. It follows that $H$ has two cells containing either $w_1t'w'_2$ or $w_1t'w''_2$ created by $T$.
Clearly, $G' \in \cal{G}$.
Hence, after $T$ we create a 3- or 5-cycle, or an ext-triangular 7-cycle, contradicting $(b)$.
Therefore, $D$ is good in $G'$.

Next let $T$ consist of deleting a nonempty set of internal vertices and adding an edge $e$ between two nonadjacent vertices.
Similarly, to complete the proof in this case, it suffices to guarantee that $G'$ has no special 9-cycles and that $D$ is good in $G'$.

Suppose $G'$ has a special 9-cycle $C$. Let $H$ be a bad partition of $C$. We have $e\in E(H)$ since otherwise $C$ is a special 9-cycle of $G$.
Hence, every cell of $H$ containing $e$ is created, which implies that we have created a 3- or 5-cycle or an ext-triangular 8-cycle, contradicting $(b)$.

Suppose $D$ is bad in $G'$. Let $H$ be a bad partition of $Int(D)$. Similarly, one can conclude that every cell of $H$ containing $e$ is created. Since $e\notin E(D)$ and $G' \in \cal{G}$, we create a 3- or 5-cycle or an ext-triangular 7-cycle, a contradiction.

At last, let $T$ consist of deleting all vertices in $S$ and identifying two edges $u_1u_2$ and $v_1v_2$.
Denote by $w_1$ the vertex of $G'$ obtained from $u_1$ and $v_1$ after $T$, and by $w_2$ one obtained from $u_2$ and $v_2$.
Since condition $(a)$ holds for both $T_1$ and $T_2$, $D$ bounds $G'$ and $\phi$ is a proper 3-coloring of $G'[V(D)]$.

Suppose we create a $6^-$-cycle $C'$ after $T$. Since condition $(b)$ holds for both $T_1$ and $T_2$,
we have $w_1,w_2\in V(C')$ and furthermore, one of the two paths of $C'$ between $w_1$ and $w_2$ connects $u_1$ and $u_2$, and the other connects $v_1$ and $v_2$. Clearly, $w_1$ and $w_2$ are nonconsecutive on $C'$, since otherwise $C'$ is a $6^-$-cycle of $G$. It follows that both $u_1u_2$ and $v_1v_2$ are contained in a $5^-$-cycles of $G-S$, a contradiction. Therefore, we create no $6^-$-cycle by $T$.
Furthermore, by a similar argument, one can conclude that we create no ext-triangular 7- or 8-cycle by $T$.

Suppose we create a special 9-cycle $C$ after $T$. Let $H$ be a bad partition of $C$.
Clearly, no cell of $H$ is created by $T$. It follows that $G$ has a 2-path between $u_1$ and $u_2$ and a 7-path between $v_1$ and $v_2$ so that edge $w_1w_2$ is a (3,8)-chord of $C$, since otherwise $C$ is a special 9-cycle of $G$. Now both $u_1u_2$ and $v_1v_2$ are contained in an $8^-$-cycle of $G$, a contradiction.
Therefore, we create no special 9-cycle after $T$.

Suppose $D$ is bad in $G'$. Let $H$ be a bad partition of $D$.
Notice that by $T$ we identify one pair of edges, and that each cell of $H$ has more than one edge shared with some other cell. If no cell of $H$ is created by $T$, then $D$ is bad in $G$.
Hence, we may assume that $H$ has a cell $C_H$ that is created by $T$.
Recall that condition $(b)$ holds for $T$, too. It follows that $H$ has either a (5,5,7)- or (5,5,8)-claw or a (5,5,5,7)-biclaw, and $C_H$ is the cell of length at least 7.
Furthermore, since $D$ is unchanged and no $6^-$-cycle is created after $T$, it is impossible that we create $C_H$ but no other cells of $H$ by $T$.
Therefore, $D$ is good in $G'$.

By the conclusions above, $\phi$ can be extended to $G'$ because of the minimality of $G$.
\end{proof}

\begin{lemma} \label{lem_good path}
Every face of $G$ contains no good path.
\end{lemma}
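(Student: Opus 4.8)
The proof goes by contradiction through the reduction machinery of Lemma~\ref{operation}. Suppose some face $f$ of $G$ contains a good path $P=v_0v_1\cdots v_k$ whose terminal edge $v_0v_1$ lies on a triangle $[v_0v_1w]$, every $v_i$ being an internal $3$-vertex. Since $G$ is $2$-connected, $f$ is bounded by a cycle and $P$ is a subpath of it, so each interior vertex $v_i$ ($1\le i\le k-1$) has a single neighbor off $f$, while each path-end $v_0,v_k$ has, besides its path-neighbor, two further neighbors off $P$; write $p,w$ for the off-path neighbors of $v_0$ and $y_k,z_k$ for those of $v_k$. The crucial local feature supplied by the triangle is that $v_0$ and $v_1$ share the neighbor $w$ (so $v_1$ is a bad vertex with $N(v_1)=\{v_0,v_2,w\}$), which will make the triangle end of $P$ cost nothing in the colouring step below.

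First I would record what an extension of $\phi$ to $P$ demands. Delete all of $v_0,\dots,v_k$ and, by the minimality of $G$, colour the remainder; it then remains to recolour $P$ so that each $v_i$ avoids the colours already assigned to its neighbors off $P$. Colouring greedily from the triangle end, $v_0$ forbids only $\phi(w),\phi(p)$, $v_1$ forbids only $\phi(v_0)$ and $\phi(w)$, and each interior $v_i$ forbids only $\phi(v_{i-1})$ and its unique off-$f$ neighbor, so a colour is always available up to $v_{k-1}$. The sole obstruction is the far endpoint $v_k$, which forbids $\phi(v_{k-1})$ together with both $\phi(y_k)$ and $\phi(z_k)$ and so may run out of colours. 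To kill this obstruction I would apply Lemma~\ref{operation} with the operation $T$ deleting $v_0,\dots,v_k$ and identifying $y_k$ with $z_k$: this forces $\phi(y_k)=\phi(z_k)$, leaving $v_k$ at least one colour, and it is legitimate because the absence of $4$-cycles forces $y_k,z_k$ to have no common neighbor other than $v_k$, so no multi-edge appears. In the degenerate sub-cases where $y_k\sim z_k$ or where identifying $y_k,z_k$ is blocked by $D$, I would instead use the edge-addition or edge-identification branch of Lemma~\ref{operation}, choosing the variant that forces a coincidence among the colours forbidden at $v_k$ and hence keeps the residual list-colouring of $P$ solvable.

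The bulk of the work, and the main obstacle, is verifying hypotheses $(a)$ and $(b)$ of Lemma~\ref{operation} for the chosen operation. Condition $(a)$ requires that no two vertices of $D$ be identified and no chord of $D$ be created; since $D$ is a good, hence chordless, cycle and the identified vertices are neighbors of the internal vertex $v_k$, this is exactly where the sub-case ``$y_k$ or $z_k$ on $D$'' must be disposed of, using that $D$ has no claw together with the splitting-path analysis of Lemma~\ref{lem_splitting path}. Condition $(b)$ — that $T$ create no $6^-$-cycle, no ext-triangular $7$- or $8$-cycle, and no special $9$-cycle — I would handle by the recurring principle that any short cycle through the image $w'$ of $y_k,z_k$ pulls back to a forbidden configuration in $G$: a new $\ell$-cycle through $w'$ corresponds to an $(\ell{-}2)$-path between $y_k$ and $z_k$ in $G-\{v_0,\dots,v_k\}$, which closes up via the edges $y_kv_k,z_kv_k$ into a cycle whose length is controlled by the no-$4$-cycle, no-$6$-cycle and no-special-$9$-cycle hypotheses and by Lemmas~\ref{lem_cycle of G}--\ref{lem triangular bad cycle}. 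Once $(a)$ and $(b)$ are checked, Lemma~\ref{operation} extends $\phi$ to $G'$, and recolouring $v_0,\dots,v_k$ as above extends it to $G$, contradicting the choice of $G$. I expect the delicate points to be precisely the coincidence and adjacency sub-cases of $y_k,z_k$ with $w$, $p$, the path, or $D$, each of which should force a forbidden $4$-, $6$- or special $9$-cycle and so be excluded.
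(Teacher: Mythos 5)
Your overall skeleton (delete the path, perform one vertex identification legitimized by Lemma~\ref{operation}, then recolour the deleted path greedily) is the same as the paper's, but the identification you chose does not work, and this is a genuine gap rather than a repairable detail. You identify $y_k$ and $z_k$, the two off-path neighbours of the far endpoint $v_k$ --- two vertices at distance $2$ through the deleted $3$-vertex $v_k$. A path of length $\ell$ between $y_k$ and $z_k$ in $G-\{v_0,\dots,v_k\}$ becomes, after identification, a cycle of length $\ell$, and pulling it back through $v_k$ only yields an $(\ell+2)$-cycle of $G$; but cycles of length $5$, $7$ and $8$ are perfectly admissible in $G$, so your ``recurring principle'' produces no contradiction. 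Concretely, the third face at $v_k$ (the one bounded by $y_kv_k$ and $v_kz_k$) may be a $3$-, $5$-, $7$- or $8$-face: nothing in the definition of a good path excludes this, and in the lemma's actual applications (five consecutive bad vertices on a $9$-face) $v_k$ is itself triangular. In each of these cases your operation creates a multi-edge, a $3$-cycle, a $5$-cycle or a $6$-cycle, violating condition $(b)$ of Lemma~\ref{operation}. Your fallback for the degenerate cases is not a proof: adding the edge $y_kz_k$ forces $\phi(y_k)\neq\phi(z_k)$, the \emph{opposite} of the coincidence you need at $v_k$, and you never specify which pair of edges an edge-identification would involve or why its hypotheses would hold.

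The paper avoids all of this by first normalising the good path to four vertices, $Q=v_2v_3v_4v_5$ on $f=[v_1\ldots v_k]$ with triangle $[tv_2v_3]$ at one end, and then identifying the \emph{distance-four} pair: $v_1$ (the face-neighbour at the triangle end, your $p$) with $x$ (the off-face neighbour of $v_4$). It colours from the far end inward: $v_5$ first (which, coloured first, has only two coloured neighbours, so no constraint is needed there at all), then $v_4$, which must avoid $\phi(x)=\phi(v_1)$, and finally the triangle pair $v_2,v_3$, which is colourable precisely because $\phi(v_1)\neq\phi(v_4)$. So your diagnosis that ``the sole obstruction is the far endpoint'' is an artifact of colouring in the wrong order, and it is what drove you to the unworkable distance-two identification. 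With the paper's identification, a created $6^-$-cycle or ext-triangular $7$-/$8$-cycle pulls back to a $12^-$-cycle of $G-v_5$ containing the path $v_1v_2v_3v_4x$, and now the triangle $[tv_2v_3]$ genuinely bites: if that cycle is good, then $t$ must lie on it and $tv_2,tv_3$ are two chords of a $9^-$-cycle, contradicting Lemma~\ref{lem_cycle of G}; if it is bad, it is adjacent to two triangles, contradicting Lemma~\ref{lem triangular bad cycle}. None of that leverage is available to your operation, so the verification of condition $(b)$ --- which you correctly identify as the bulk of the work --- cannot be completed along your route.
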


\begin{proof}
Suppose to the contrary that $G$ has a $k$-face $f$ that contains a good path $Q$. Since $G\in \cal{G}$, we have $k\geq 7$.
Let $f=[v_1\ldots v_k]$ and $Q=v_2\ldots v_5$. Let $t$ be a common neighbor of $v_2$ and $v_3$ not on $Q$, and $x$ be a neighbor of $v_4$ other than $v_3$ and $v_5$. Clearly, $x\neq v_1$. We do a graph operation $T$ on $G$ as follows: delete all vertices on $Q$ and identify $v_1$ and $x$, obtaining a plane graph $G'$.

Suppose that through $T$ we identify two vertices on
$D$, or create an edge connecting two vertices on $D$.
$G$ has a splitting 4- or 5-path $P$ of $D$ that contains
path $v_1\ldots v_4x$. Thus by Lemma \ref{lem_splitting path}, $G$ has a $9^-$-cycle $C$ formed by $P$ and $D$.
Clearly, $C$ is a good cycle and thus none of $t$ and $v_5$ lies inside $C$, which implies $t$ lies on $C$.
Now $C$ has two chords $tv_2$ and $tv_3$, a contradiction with Lemma \ref{lem_cycle of G}. Therefore, item $(a)$ in Lemma \ref{operation} holds for $T$.

Suppose that through $T$ we create a $6^-$-cycle or an ext-triangular 7- or 8-cycle.
Thus $G-v_5$ has a $12^-$-cycle $C$ containing path $v_1\ldots v_4x$, such that $\overline{Ext}(C)$ has a triangle adjacent to $C$ with common edge on $C-\{v_2,v_3,v_4\}$ when $|C|\in\{11,12\}$.
It follows that $t\notin V(C)$ since otherwise $G$ has a $6^-$-face adjacent to triangle $[tv_2v_3]$.
Hence, $C$ is a bad cycle containing either $t$ or $v_5$ inside.
Now $C$ is adjacent to two triangles, contradicting Lemma \ref{lem triangular bad cycle}.
Therefore, item $(b)$ in Lemma \ref{operation} holds for $T$.



Hence $\phi$ can be extended to $G'$ by Lemma \ref{operation}. Next we extend $\phi$ from $G'$ to $G$: first properly color $v_5$ and $v_4$ in turn, then $v_2$ and $v_3$ can be properly colored since $v_1$ and $v_4$ receive different colors.
\end{proof}

\begin{lemma} \label{lem_face all 3}
$G$ has no $k$-face containing $k$ internal 3-vertex, where $k\in \{5,7\}$.
\end{lemma}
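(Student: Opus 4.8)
The plan is to collapse the face $f=[v_1\ldots v_k]$ entirely and recolour its boundary cycle, exploiting that an odd cycle with one forbidden colour per vertex is colourable as soon as the forbidden colours are not all equal. Suppose for contradiction such an $f$ exists. Since $k\in\{5,7\}$, the boundary of $f$ is a $9^-$-cycle and hence, by Lemma \ref{lem_cycle of G}, chordless; as each $v_i$ is a $3$-vertex it therefore has exactly one neighbour $u_i\notin V(f)$. My target is to build a smaller graph $G'$ by deleting $v_1,\ldots,v_k$ (possibly after adding one edge among the $u_i$) so that Lemma \ref{operation} lets me extend $\phi$ to $G'$, and so that in $G'$ two of the $u_i$ are forced to receive different colours.

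The colouring step I would isolate as a small fact: if $C=w_1\ldots w_m$ is a cycle with $m$ odd and each $w_i$ carries one forbidden colour $a_i\in\{1,2,3\}$, then $C$ admits a proper $3$-colouring with $c(w_i)\neq a_i$ precisely when the $a_i$ are not all equal (if they are all equal every vertex is confined to the same two colours and an odd cycle cannot be $2$-coloured; otherwise one breaks the cycle at a vertex whose forbidden colour differs from a neighbour's and colours the resulting path greedily). Applying this with $a_i=\phi(u_i)$ shows that, once $\phi$ has been extended to $G'$, the boundary cycle $v_1\ldots v_k$ can be recoloured in $G$ as long as $\phi(u_1),\ldots,\phi(u_k)$ are not all equal. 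Thus it suffices to guarantee that some two outer neighbours get distinct colours.

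To produce that guarantee I would first pin down the $u_i$. No two consecutive outer neighbours coincide: if $u_i=u_{i+1}$ then $v_iv_{i+1}u_i$ is a triangle sharing an edge with $f$, so the subpath $v_{i-1}v_iv_{i+1}$ of $f$ is a good path, contradicting Lemma \ref{lem_good path}; and $u_i\neq u_{i+2}$ since $u_iv_iv_{i+1}v_{i+2}u_i$ would be a $4$-cycle. Now there are two regimes. If some nonconsecutive pair $u_i,u_j$ is already adjacent in $G$, I simply delete $v_1,\ldots,v_k$: this is a pure deletion of internal vertices, so conditions $(a)$ and $(b)$ of Lemma \ref{operation} hold trivially, $\phi$ extends to $G'$, and the adjacency forces $\phi(u_i)\neq\phi(u_j)$, finishing the case. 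Otherwise no two $u_i$ are adjacent, and I delete $v_1,\ldots,v_k$ while adding a single edge between a carefully chosen pair $u_p,u_q$ of outer neighbours; the point is that any $u_p$--$u_q$ path of length $\ell$ in $G-\{v_1,\ldots,v_k\}$ closes up with the facial arc of $f$ joining $u_p$ to $u_q$ into a cycle of $G$, so that a short such path already yields a $4$- or $6$-cycle, or (after inserting the new edge) an ext-triangular $7$- or $8$-cycle or a special $9$-cycle, none of which is allowed. This is exactly what conditions $(a)$ and $(b)$ demand, after which $\phi(u_p)\neq\phi(u_q)$ and the recolouring above applies.

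The main obstacle is this last case: showing that when no two outer neighbours are adjacent one can always select a pair $u_p,u_q$ whose only short connections run through $f$, so that the added edge is admissible. This is where the girth hypotheses of $\cal G$ must be pushed hardest --- every forbidden short $u_p$--$u_q$ path has to be converted into a $4$-cycle, a $6$-cycle, or a special $9$-cycle, and for $k=7$ one must additionally dispose of the coincidences $u_i=u_{i+3}$ (which create a separating $5$-cycle, excluded by Lemma \ref{lem_sep good cycle}) and of the richer supply of $(3,8)$-chords and $(5,5,5)$-claws that a $7$-face can spawn. I expect the $k=7$ analysis, rather than the colouring, to carry the real weight of the proof.
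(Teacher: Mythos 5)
Your overall skeleton is exactly the paper's: delete $V(f)$, insert one edge between two of the outer neighbours $u_i$, extend $\phi$ to the smaller graph via Lemma \ref{operation}, and recolour the odd face greedily once two of the $\phi(u_i)$ differ. But there is a genuine gap: the entire content of the paper's proof is the verification of conditions $(a)$ and $(b)$ of Lemma \ref{operation} for the added edge, and you leave precisely that open, explicitly deferring it as ``the main obstacle.'' Your sketch of the obstruction set is moreover too coarse to be completed as stated: you say every short $u_p$--$u_q$ connection converts into a $4$-cycle, $6$-cycle, or special $9$-cycle, but for instance a $5$-path from $u_2$ to $u_5$ in $G-\{v_3,v_4\}$ closes with $u_5v_5v_1v_2u_2$ into a $9$-cycle of $G$ that need not be special. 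The paper instead takes the $11^-$- or $12^-$-cycle $C$ through the surviving path, splits on whether $C$ is good or bad, and derives contradictions from Lemmas \ref{lem_sep good cycle}, \ref{bad cycle} and \ref{lem triangular bad cycle} (bad cycles have length $11$ or $12$ with a prescribed claw/biclaw structure, and an ext-triangular bad cycle must have a $(5,5,7)$-claw or $(5,5,5,7)$-biclaw, incompatible with the locations of $u_1$, $v_3$, $v_4$, resp.\ $u_6,u_7$); condition $(a)$ is handled by Lemma \ref{lem_splitting path}, which you never invoke. None of this machinery is replaceable by the bare girth conditions you appeal to.

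You also misjudge where the selection of the pair $(u_p,u_q)$ is delicate: it is $k=5$, not $k=7$. For $k=5$ the pair cannot be arbitrary; the paper first uses the absence of special $9$-cycles --- concretely, of $(5,5,5)$-claws, which arise as three mutually adjacent $5$-faces around a $3$-vertex --- to find a vertex $v_1$ of $f$ incident with two $7^+$-faces, and only then joins $u_5$ and $u_2$; this two-$7^+$-faces property is exactly what kills the remaining bad-$11$-cycle/$(5,5,7)$-claw subcase. For $k=7$ the fixed pair $(u_1,u_5)$ works uniformly. Finally, your ``first regime'' is vacuous: for $k\in\{5,7\}$, adjacency between nonconsecutive outer neighbours always closes a forbidden cycle --- e.g.\ $u_1u_5\in E(G)$ with $k=7$ yields the $6$-cycle $[u_1v_1v_7v_6v_5u_5]$, and $u_1u_3\in E(G)$ yields a separating $5$-cycle, impossible by Lemma \ref{lem_cycle of G} --- so nonadjacency of the chosen pair is automatic and no case split is needed (note also that a pure deletion is not among the operations covered by Lemma \ref{operation}, though minimality of $G$ would handle it directly). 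Your colouring step and the exclusion of coincidences among consecutive $u_i$ are fine and match the paper; what is missing is the heart of the argument.
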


\begin{proof}
Suppose to the contrary that $G$ has such a $k$-face $f$. Let $f=[v_1\ldots v_k]$.
For $1\leq i\leq k$, denote by $u_i$ the neighbor of $v_i$ not on $f$.
Clearly, vertices $u_1,\cdots,u_k$ are pairwise distinct.

(1)~ Let $k=5$.
Since $G$ has no special 9-cycles, $f$ has a vertex incident with two 7$^+$-faces.
Without loss of generality, let $v_1$ be such a vertex.
We do a graph operation $T$ on $G$ as follows: delete all the vertices on $f$ and insert an edge between $u_5$ and $u_2$. Denote by $G'$ the resulting plane graph.

Suppose that $u_2,u_5\in V(D)$.
As a splitting 4-path of $D$, path $u_5v_5v_1v_2u_2$ together with $D$ forms a $5$- or $7$-face of $G$, an obvious contradiction. Therefore, item $(a)$ holds for $T$.

Suppose through $T$ we create a $6^-$-cycle or an ext-triangular 7- or 8-cycle.
Then $G-\{v_3,v_4\}$ has a $11^-$-cycle $C$ containing path $u_5v_5v_1v_2u_2$ such that $\overline{Ext}(C)$ has a triangle adjacent to $C$ with common edge on $C-\{v_5,v_1,v_2\}$ when $|C|\in \{10,11\}$.
If $C$ is a good cycle, then none of $u_1,v_3$ and $v_4$ lies inside $C$, which implies $u_1\in V(C)$.
Now $u_1v_1$ divides $C$ into two cycles $C_1$ and $C_2$.
On one hand, since $v_1$ is incident with two 7$^+$-faces, we have $|C_1|, |C_2|\geq 7$.
On the other hand, we have $|C_1|+|C_2|=|C|+2\leq 13$. An contradiction is obtained.
Hence, we may assume $C$ is a bad 11-cycle.
It follows that $C$ has a $(5,5,7)$-claw by Lemma \ref{lem triangular bad cycle}, which is impossible since now either $C$ contains two vertices $v_3$ and $v_4$ inside or $\overline{Int}(C)$ has two 7$^+$-faces.
Therefore, item $(b)$ holds for $T$.

Hence by Lemma \ref{operation}, $\phi$ can be extended to $G'$. Notice that $u_1$ receives a color different from at least one of $u_2$ and $u_5$. Without loss of generality, let us say $u_2$.
We extend $\phi$ from $G'$ to $G$ in following way: color $v_2$ same as $u_1$, then $v_3,v_4,v_5$ and $v_1$ can be properly colored in turn.

(2)~ Let $k=7$. We do following operation $T$ on $G$: delete all vertices on $f$ and insert an edge between $u_1$ and $u_5$, obtaining a plane graph $G'$.

Suppose both $u_1$ and $u_5$ belong to $D$. Let $P=u_1v_1v_7v_6v_5u_5$. Since $P$ is a splitting path of $D$, $G$ has a $9^-$-cycle $C$ formed by $P$ and $D$ by Lemma \ref{lem_splitting path}. Clearly, $C$ is good. Thus $u_6,u_7\in V(C)$. Now $C$ has two chords, a contradiction with Lemma \ref{lem_cycle of G}. Therefore, item $(a)$ holds for $T$.

Suppose through $T$ we create a $6^-$-cycle or an ext-triangular 7- or 8-cycle.
Then $G-\{v_2,v_3,v_4\}$ has a $12^-$-cycle $C$ containing path $P$ such that $\overline{Ext}(C)$ has a triangle adjacent to $C$ with common edge on $C-\{v_1,v_7,v_6,v_5\}$ when $|C|\in \{11,12\}$.
If $C$ is a good cycle, then both $u_6$ and $u_7$ lie on $C$.
Since $|C|\leq12$, edges $v_6u_6$ and $v_7u_7$ divide $C$ into three cycles, each of which has length 5.
It follows that $|C|=11$ and hence $\overline{Int}(C)$ has a 5-face adjacent to a triangle, a contradiction.
Hence, we may assume $C$ is a bad cycle.
By Lemma \ref{lem triangular bad cycle}, $C$ has either a $(5,5,7)$-claw or a $(5,5,5,7)$-biclaw, which is impossible obviously.
Therefore, item $(b)$ holds for $T$.

Hence by Lemma \ref{operation}, $\phi$ can be extended to $G'$.
Furthermore, $\phi$ can be extended from $G'$ to $G$ in a similar way as part (1) of this lemma.
\end{proof}

\begin{lemma} \label{lem_two light 7-face}
$G$ has no two 7-faces $[xv_1\ldots v_6]$ and $[xu_1\ldots u_6]$ such that $x$ is their unique common vertex, $u_1$ and $v_1$ are adjacent, both $x$ and $u_1$ are internal 4-vertices, and all other vertices on these two 7-faces are internal 3-vertices.
\end{lemma}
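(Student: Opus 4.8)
The plan is to assume such a configuration exists and derive a contradiction by a deletion-and-extension argument, in the spirit of Lemmas \ref{lem_good path}--\ref{lem_face all 3}. First I would record the local structure forced by the hypotheses. Since $x\sim v_1$, $x\sim u_1$ and $u_1\sim v_1$, the vertices $x,v_1,u_1$ span a triangle $[xu_1v_1]$; hence both $7$-faces are light, and they are precisely the two faces adjacent to this triangle along the edges $xv_1$ and $xu_1$. As $x$ is an internal $4$-vertex incident with both $7$-faces, $N(x)=\{v_1,v_6,u_1,u_6\}$; as $u_1$ is an internal $4$-vertex lying on the triangle and on $[xu_1\ldots u_6]$, $N(u_1)=\{x,v_1,u_2,y\}$ for a single off-face neighbor $y$. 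Every remaining vertex $v_2,\dots,v_6,u_2,\dots,u_6$ is an internal $3$-vertex, so each has exactly one neighbor off the two faces (call them $v_2',\dots,v_6'$ and $u_2',\dots,u_6'$), while $v_1$ has all three of its neighbors on the configuration. In any proper $3$-coloring the triangle $[xu_1v_1]$ uses three distinct colors; in particular $\phi(x)\ne\phi(v_1)$ and $\phi(x)\ne\phi(u_1)$, which is what will eventually leave room at the endpoints adjacent to $x$.

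Second, before reducing I would clear away degenerate coincidences. Using that $G$ has no $4$- or $6$-cycle together with Lemma \ref{lem_cycle of G}, I would verify that the off-face neighbors behave generically: consecutive off-face neighbors are distinct (otherwise a short cycle appears), no off-face neighbor coincides with a configuration vertex in a way producing a chord forbidden by Lemma \ref{lem_cycle of G}, and $y\notin\{v_2,u_2\}$ (else $u_1$ or $v_2$ would exceed its degree bound). This guarantees that the set $S$ of internal $3$-vertices I am about to delete is \emph{clean}, so that deleting it neither destroys the planar structure around the faces nor moves $D$.

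Third comes the reduction. I would delete the interior $3$-vertices of the two faces (possibly together with $x$) and add one edge between two carefully chosen off-face neighbors, the analogue of the edge $u_1u_5$ used in Lemma \ref{lem_face all 3}(2); its sole purpose is to force two off-face colors to differ. The edge is chosen so that, once $\phi$ has been extended, each of the two residual odd cycles coming from $F_v$ and $F_u$ carries a list assignment of size $2$ that is \emph{not} uniform, which is exactly the situation in which an odd cycle is list-colorable. To apply Lemma \ref{operation} I then check conditions $(a)$ and $(b)$. Condition $(a)$ holds because identifying two vertices of $D$ or joining two vertices of $D$ would create a short splitting-path face of $D$ excluded by Lemma \ref{lem_splitting path}. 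For condition $(b)$ I use Lemmas \ref{bad cycle} and \ref{lem triangular bad cycle}: the only triangle incident to the deleted region is $[xu_1v_1]$, so any newly created ext-triangular short cycle would be a bad cycle adjacent to two triangles, contradicting Lemma \ref{lem triangular bad cycle}, and a new special $9$-cycle would have to close one of its cells through the single added edge, again forcing a forbidden short or doubly-triangular cycle. With $(a)$ and $(b)$ verified, Lemma \ref{operation} extends $\phi$ to the reduced graph $G'$.

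Finally I would color back: the triangle $[xu_1v_1]$ is already properly colored with three colors, and the two paths $v_2\cdots v_6$ and $u_2\cdots u_6$ are colored as list-colorings of the two residual odd cycles, where the added edge makes the lists non-uniform and hence a proper coloring exists; the endpoints adjacent to $x$ survive because $\phi(x)\ne\phi(v_1)$, $\phi(x)\ne\phi(u_1)$ free up a color there. This contradicts the minimality of $G$. I expect the \textbf{main obstacle} to be the simultaneous bookkeeping in the third step: a single added edge must make \emph{both} residual odd cycles list-colorable at once, while the truly dangerous vertices are $u_1$ (which has three already-colored neighbors $x,u_2,y$) and the far endpoints $v_6,u_6$ (each adjacent to the precolored vertex $x$), so the edge must be placed to neutralize all of these; and verifying the ``no special $9$-cycle'' half of condition $(b)$ is delicate precisely because the ambient triangle $[xu_1v_1]$ turns many nearby short cycles into ext-triangular ones, forcing a careful case analysis of which cells a new edge is allowed to close.
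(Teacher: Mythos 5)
Your structural setup is right (the triangle $[xu_1v_1]$, the neighborhoods of $x$ and $u_1$, the off-face neighbors), and your verification strategy for conditions $(a)$ and $(b)$ of Lemma \ref{operation} is in the right spirit. But at the crucial step your proposal diverges from what actually works, and the obstacle you flag at the end is not mere bookkeeping --- it is a genuine gap. You propose deleting the $3$-vertices (perhaps with $x$) and \emph{adding one edge} between two off-face neighbors so as to force two external colors to \emph{differ}, then finishing by list-coloring two ``residual odd cycles'' with non-uniform $2$-lists. This mechanism cannot be made to work here. The two bottleneck vertices are $u_1$ (adjacent to $x$, $u_2$ and its external neighbor $y$) and $v_6$ (adjacent to $x$, $v_5$ and its external neighbor $z$); in any greedy completion each of them ends up seeing three already-colored neighbors, and since both are adjacent to $x$ and to one of $y,z$, the only way to relieve both simultaneously with three colors is to arrange the \emph{equality} $\phi(y)=\phi(z)$ and then color $x$ with that common color. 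A single added edge imposes a \emph{dis}equality, which relieves at most one side: if $\phi(y)\neq\phi(z)$, whichever color you give $x$ leaves one of $u_1$, $v_6$ facing three potentially distinct colors. Moreover the ``odd cycle with non-uniform $2$-lists'' criterion does not apply to the actual residual structure, which is not a disjoint union of cycles but two $7$-cycles sharing the vertex $x$ together with the edge $u_1v_1$ (cyclomatic number $2$), and in which some lists drop to size $1$ (e.g.\ at $v_1$, whose three neighbors all lie in the configuration). Your write-up is also internally inconsistent on this point: you delete the interior $3$-vertices, which include $v_1$, yet later treat the triangle $[xu_1v_1]$ as ``already properly colored.''

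The paper closes exactly this gap with an \emph{identification} rather than an edge addition: it deletes all of $V(f)\cup V(g)$ (all fourteen vertices, including $x$ and $u_1$) and identifies $y$ with $z$, using the first form of Lemma \ref{operation}. Condition $(a)$ is checked via Lemma \ref{lem_splitting path} (a splitting $4$- or $5$-path containing $yu_1xv_6z$ would bound a good, hence non-separating, $9^-$-cycle, contradicting that $u_2$ or $v_1$ lies inside), and condition $(b)$ by a vertex count: any created $6^-$-cycle or ext-triangular $7$- or $8$-cycle yields a $12^-$-cycle through $yu_1xv_6z$ that would have to contain either $u_2,\ldots,u_6$ or $v_1,\ldots,v_5$ inside, whereas by Lemmas \ref{lem_sep good cycle} and \ref{bad cycle} a $12^-$-cycle of $G$ has at most three interior vertices --- note this is a counting contradiction, not the ``two adjacent triangles'' argument you sketch. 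The extension back is then immediate: color $x$ with the common color of $y=z$, color $u_6,\ldots,u_1$ in turn ($u_1$ is safe because $x$ and $y$ share a color), then $v_1,\ldots,v_6$ in turn ($v_6$ is safe because $x$ and $z$ share a color). So the missing idea in your proposal is precisely the identification of $y$ and $z$ to manufacture a color equality; without it, the reduction you describe does not extend back to $G$.
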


\begin{proof}
Suppose to the contrary $G$ has such two 7-faces. Let $f=[xv_1\ldots v_6]$ and $g=[xu_1\ldots u_6]$. Let $y$ and $z$ be the neighbors of $u_1$ and $v_6$ not on $f\cup g$, respectively.
We do the following operation $T$ on $G$: delete both $V(f)$ and $V(g)$, and identify $z$ and $y$, obtaining a plane graph $G'$.

Suppose through $T$ we identify two vertices on
$D$, or create an edge connecting two vertices on $D$.
Then $G$ has a splitting 4- or 5-path $P$ of $D$ containing
path $yu_1xv_6z$. It follows from Lemma \ref{lem_splitting path} that $G$ has a $9^-$-cycle $C$ formed by $P$ and $D$.
Hence, $C$ is a good cycle and thus not separating, contradicting that $C$ has either $u_2$ or $v_1$ inside.
Therefore, item $(a)$ holds for $T$.

Suppose through $T$ we create a $6^-$-cycle or an ext-triangular 7- or 8-cycle.
Then $G-V(f)\cup V(g)$ has a $8^-$-path between $y$ and $z$, which together with path $yu_1xv_6z$ form a $12^-$-cycle $C$.
It follows that $G$ has at most three vertices inside $C$, contradicting the fact that now either $u_2,\ldots, u_6$ or $v_1,\ldots,v_5$ lie inside $C$.
Therefore, item $(b)$ holds for $T$.

Hence by Lemma \ref{operation}, $\phi$ can be extended to $G'$.
We further extend $\phi$ from $G'$ to $G$ in following way: first color $x$ same as $y$, then $u_6,\ldots,u_1$ can be properly colored in turn, and so do $v_1,\ldots,v_6$.
\end{proof}

\begin{lemma} \label{lem_theta}
$G$ has no 8-cycle $[xyzu_1\ldots u_5]$ with a chord $xz$ such that $z$ is an internal 4-vertex and all other vertices of this 8-cycle are internal 3-vertices.
\end{lemma}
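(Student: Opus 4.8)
The plan is to show that this configuration is reducible, exactly in the style of Lemmas \ref{lem_good path}, \ref{lem_face all 3} and \ref{lem_two light 7-face}: perform a single deletion/identification operation $T$, invoke Lemma \ref{operation}, and then colour the deleted vertices back. Write $C=[xyzu_1\ldots u_5]$ for the $8$-cycle. By Lemma \ref{lem_cycle of G} the chord $xz$ is a $(3,7)$-chord, so $[xyz]$ bounds a $3$-face and $[xzu_1\ldots u_5]$ bounds a $7$-face; in particular $C$ has empty interior. Hence the third neighbour $y'$ of $y$, the fourth neighbour $z'$ of $z$ and the third neighbour $u_i'$ of each $u_i$ all lie in $\overline{Ext}(C)$, while $x$ has all three of its neighbours $u_5,y,z$ on $C$. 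Before anything else I would record the adjacency restrictions forced by $4$-cycle-freeness: $u_1'\notin\{x,y,z\}$, $u_1'$ is adjacent to none of $x,y,z$, $u_1'\ne z'$ and $u_1'$ is not adjacent to $z'$, since each such coincidence closes a $4$-cycle through the $2$-path $z u_1 u_1'$.

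First I would apply the operation $T$: delete $u_1,\ldots,u_5$ and identify $z$ with $u_1'$, obtaining $G'$. The point of this particular identification is the colour extension. Granting for the moment that $\phi$ extends to $G'$, the triangle $[xyz]$ is already properly coloured in $G'$ and, crucially, $\phi(z)=\phi(u_1')$. I then recolour $u_5,u_4,u_3,u_2,u_1$ in this order: each of $u_5,\ldots,u_2$ sees at that moment only two forbidden colours (its external neighbour together with the already-coloured path-neighbour), and the last vertex $u_1$ is adjacent to $z$, $u_2$ and $u_1'$ but, because $\phi(z)=\phi(u_1')$, again sees only two colours. So the recolouring always succeeds. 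This is precisely why one must force the \emph{equality} $\phi(z)=\phi(u_1')$ by identifying, rather than the mere inequality obtained by adding an edge; note also that keeping $y$ (hence the whole triangle) inside $G'$ is what lets $y$ take care of itself.

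It then remains to verify the hypotheses of Lemma \ref{operation}. Since $z$ is internal, $T$ never identifies two vertices of $D$; and if $T$ created an edge joining two vertices of $D$, then $z'$ and $u_1'$ would both lie on $D$, making $z'zu_1u_1'$ a splitting $3$-path of $D$, whence Lemma \ref{lem_splitting path}(2) produces a $5$-face that cannot coexist with the $3$- and $7$-faces already sitting at the edge $zu_1$; thus $(a)$ holds. For $(b)$, any created cycle through the identified vertex corresponds to a $z$–$u_1'$ path $P$ in $G-\{u_1,\ldots,u_5\}$, and $P$ together with $u_1'u_1$ and $u_1z$ closes to a cycle of length $|P|+2$ in $G$. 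The cases $|P|\in\{2,4\}$ give $4$- and $6$-cycles of $G$ and are excluded outright. A created $3$-cycle ($|P|=3$) comes from a path $z-p-q-u_1'$; if $p\in\{x,y\}$ then, following the forced continuation through the degree-constrained vertices $x,y$, one reaches $y'$ and closes the forbidden $6$-cycle $xyy'u_1'u_1z$ (or a $4$-cycle), so the only survivor is the path beginning with the edge $zz'$.

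The hard part will be the medium created cycles in $(b)$, namely those of length $5$ and $6$ in $G'$ (coming from $|P|=5,6$), whose counterparts in $G$ are $7$- and $8$-cycles that are not forbidden per se. My approach here is to glue such a cycle to the triangle $[xyz]$ along a shared triangle-edge: for $|P|=6$ this yields a $9$-cycle of $G$ carrying a $(3,8)$-chord, i.e. a \emph{special} $9$-cycle, a contradiction; for $|P|=5$ the forced routing through $x,y$ again traverses $y'$ and closes a $6$-cycle. The stubborn residual cases all begin with the edge $zz'$ (or symmetrically end at $u_5'$), and for these I would run the symmetric operation instead—delete $u_1,\ldots,u_5$ and identify $x$ with $u_5'$, whose residual involves $z'u_5'$—and argue that the two residuals cannot occur together without forcing a forbidden cycle; failing that, the two-edge identification clause of Lemma \ref{operation}, applied to $zu_1$ and $xu_5$, supplies the extra freedom (its weaker ``no $8^-$-cycle'' requirement is tailored to exactly these medium-length obstructions). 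Marshalling this case analysis cleanly, rather than the routine extension of Step 2, is where the real work lies.
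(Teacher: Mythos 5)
Your proposal has a genuine gap, and you concede it yourself: condition $(b)$ of Lemma \ref{operation} cannot be verified for your operation $T$ (delete $u_1,\ldots,u_5$, identify $z$ with $u_1'$). A cycle of $G'$ through the identified vertex pulls back to a $z$--$u_1'$ path $P$ in $G-\{u_1,\ldots,u_5\}$, which closes through $u_1$ to a $(|P|+2)$-cycle of $G$; so a created triangle of $G'$ comes from a perfectly legal $5$-cycle of $G$ (e.g.\ $[zz'qu_1'u_1]$ -- $5$-cycles are not forbidden in $\mathcal{G}$), and created $5$- and $6$-cycles come from legal $7$- and $8$-cycles of $G$. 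Nothing in the configuration excludes these, your ``glue to the triangle $[xyz]$'' trick only handles routings through $x,y$, and the residual cases through $zz'$ remain open. Your fallbacks do not close them: the symmetric operation at $x,u_5'$ faces the mirror-image obstruction and both residuals can occur simultaneously; and the two-edge clause of Lemma \ref{operation} does not even apply to $zu_1$ and $xu_5$, since $u_1,u_5$ would then have to survive the deletion, $x$ and $z$ are adjacent (the chord, so identifying them is disallowed under condition $(a)$/simplicity), and in any case both edges lie on the $7$-face $[xzu_1\ldots u_5]$, an $8^-$-cycle of $G-S$ for any admissible $S$, violating that clause's hypothesis.

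The idea you missed is that no identification is needed at all. Every vertex of the configuration is internal, and by Lemma \ref{lem_cycle of G} the chord $xz$ splits $C$ into a $3$-face and a $7$-face, so $C$ has empty interior; the paper simply deletes all of $V(C)$, including $x$, $y$ and $z$. Vertex deletion creates no new subgraphs, so $G-V(C)$ has no $4$-, $6$- or special $9$-cycle and $D$ stays good, and minimality extends $\phi$ directly -- Lemma \ref{operation} and its delicate conditions are bypassed entirely. The equality $\phi(z)=\phi(u_1')$ you were trying to force is replaced by a two-case back-filling argument exploiting the chord: if $\phi(z')=\phi(y')$, colour $x$ with $\phi(z')$ and then $u_5,\ldots,u_1,z,y$ in turn ($z$ then sees only the two colours $\phi(u_1)$ and $\phi(z')=\phi(x)$, and $y$ only $\phi(z)$ and $\phi(x)=\phi(y')$); if $\phi(z')\neq\phi(y')$, colour $z$ with $\phi(y')$ and then $u_1,\ldots,u_5,x,y$ in turn ($y$ last sees only $\phi(x)$ and $\phi(z)=\phi(y')$). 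Each vertex in the chosen order has at most two forbidden colours, so the extension always succeeds, with no case analysis on created cycles whatsoever.
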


\begin{proof}
Suppose to the contrary that $G$ has such an 8-cycle $C$.
Let $z'$ and $y'$ be the neighbors of $z$ and $y$ not on $C$, respectively.
We remove $C$ from $G$ to obtain a plane graph $G'$ with fewer vertices.
By the minimality of $G$, $\phi$ can be extended to $G'$. We complete the proof by extending $\phi$ from $G'$ to $G$ in following way: if $z'$ and $y'$ receive a same color, then we color $x$ same as $z'$ and finally, $u_5,\ldots,u_1,z,y$ can be properly colored in turn; otherwise, we color $z$ same as $y'$, and then $u_1,\ldots,u_5,x,y$ can be properly colored in turn.
\end{proof}

\begin{lemma} \label{lem_9-face}
$G$ has no 9-face $[u_1\ldots u_9]$ such that $u_1,u_2,u_3,u_5,u_6,u_7$ are six bad vertices and $u_4$ is a 4-vertex incident with two 3-faces.
\end{lemma}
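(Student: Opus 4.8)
The plan is to contradict the minimality of $G$ by deleting the internal part of this configuration, reconnecting it via Lemma~\ref{operation}, and then recoloring. Before anything else I would pin down the local picture. Write $a,w_3,w_5,b$ for the apexes of the triangles incident with the bad vertices, and recall that two triangles cannot share an edge, for otherwise $G$ has a $4$-cycle. Applying this to the run $u_1u_2u_3$ shows the triangle at $u_2$ cannot sit on both of its sides, so the triangles must ``propagate'': up to the mirror image across $u_4$, the vertices $u_1,u_2$ carry a common triangle $[u_1u_2a]$, while the triangle forced at $u_3$ uses the edge $u_3u_4$. Since $u_4$ is a $4$-vertex with exactly two incident $3$-faces, these faces must be $[u_3u_4w_3]$ and $[u_4u_5w_5]$ (any other placement makes two triangles share an edge at $u_4$, hence a $4$-cycle), and symmetrically $[u_6u_7b]$ is the common triangle of the second run. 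A short check using the absence of $4$- and $6$-cycles shows $a,w_3,w_5,b$ are four distinct vertices, none coinciding with a vertex reached across the gap $u_3u_4u_5$ (such a coincidence would yield a $4$- or $6$-cycle). Hence $u_1,\ldots,u_7$ are internal and their apexes are genuine.

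Next I would perform the operation $T$ of Lemma~\ref{operation}: delete the internal vertices $u_1,\ldots,u_7$ and identify a suitable pair of the remaining ``frame'' vertices among $a,w_3,w_5,b,u_8,u_9$, the identification being chosen to force the color coincidence needed to recolor the two ends. To verify hypothesis $(a)$ I would invoke Lemma~\ref{lem_splitting path}: were $T$ to identify two vertices of $D$ or add an edge joining two vertices of $D$, the deleted vertices would lie inside a splitting $4$- or $5$-path of $D$, producing a good $9^-$-cycle $C$; but then an apex would be forced onto $C$ as a chord, contradicting Lemma~\ref{lem_cycle of G}. For $(b)$ I would argue that any created $6^-$-cycle or ext-triangular $7$- or $8$-cycle must run through the identified vertex, and combining it with the four triangles already located contradicts Lemma~\ref{bad cycle} or Lemma~\ref{lem triangular bad cycle}. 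Establishing $(b)$ in all cases is the main obstacle, precisely because $u_8$ and $u_9$ are unconstrained---possibly external and of arbitrary degree---so the positions of the apexes relative to $D$ and the exact merged faces must be treated by a short but genuine case analysis.

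Once $(a)$ and $(b)$ hold, Lemma~\ref{operation} extends $\phi$ to the smaller graph $G'$, and it remains to recolor $u_1,\ldots,u_7$. I would color $u_4$ first: reached before $u_3$ and $u_5$, it sees only its two apexes $w_3,w_5$, hence has a free color, disposing of the only $4$-vertex. Spreading outward along each run, every $u_i$ with $i\in\{2,3,5,6\}$ is a $3$-vertex meeting at most two already-colored neighbors when it is reached, hence is colorable. The only real difficulty is at the end vertices $u_1$ and $u_7$, each adjacent to an undeleted, already-colored vertex ($u_9$ and $u_8$ respectively) besides an apex and an inner neighbor; this is exactly the freedom the identification in $T$ is meant to supply, and I expect the argument here to split into two color cases in the manner of Lemma~\ref{lem_theta}. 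Carrying out this recoloring together with the verification of $(b)$ is the whole of the work; with it done, $\phi$ extends to $G$, contradicting the choice of $G$.
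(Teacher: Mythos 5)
Your structural analysis is sound and matches the paper's (the four 3-faces are forced to be $[xu_1u_2]$, $[yu_3u_4]$, $[zu_4u_5]$, $[wu_6u_7]$, with $a=x$, $w_3=y$, $w_5=z$, $b=w$), and the overall frame --- delete the configuration, apply Lemma \ref{operation}, recolor --- is the right one. But the crux of the proof is exactly the step you leave open: \emph{which} identification to perform. You propose to delete $u_1,\ldots,u_7$ and identify one ``suitable pair'' among $x,y,z,w,u_8,u_9$, i.e.\ to use only the first part of Lemma \ref{operation}, which permits a single vertex identification (or a single added edge). That single merge buys one color coincidence, while the recoloring needs control at \emph{both} ends: the pair $u_1,u_2$ (common apex $x$) is completable precisely unless $u_3$ and $u_9$ share a color different from that of $x$, and the pair $u_6,u_7$ (apex $w$) precisely unless $u_5$ and $u_8$ share a color different from that of $w$. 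The natural candidates fail: for instance, identify $u_9$ with $z$ and suppose the coloring of the reduced graph gives $y,z$ distinct colors and $x$ a color different from that of $z$; then $u_4$ is forced to the third color, $u_3$ is forced to the color of $z=u_9$, and $u_1,u_2$ both get the same single admissible color --- the extension fails. Symmetric failures kill the other one-vertex choices, so ``chosen to force the color coincidence needed to recolor the two ends'' cannot be cashed in as stated. (A smaller flaw: deleting $u_4$ presumes $u_4$ is internal, which the statement of the lemma does not assert.)

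The paper's proof runs through the second part of Lemma \ref{operation}, which your plan never invokes and which exists precisely for this lemma: it deletes only the six bad vertices $S=\{u_1,u_2,u_3,u_5,u_6,u_7\}$, keeps $u_4$, and identifies the \emph{edge} $u_8u_9$ with the \emph{edge} $zu_4$, so that $u_8\equiv z$ and $u_9\equiv u_4$. This forces both coincidences simultaneously: $u_5$, being adjacent to $z$, automatically differs from $u_8$, unlocking the pair $u_6,u_7$ through $w$; and $u_3$, being adjacent to $u_4$, automatically differs from $u_9$, unlocking the pair $u_1,u_2$ through $x$, exactly as in Lemma \ref{lem_good path}. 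The price is the extra hypotheses of that part of the lemma: conditions $(a)$ and $(b)$ must be verified for both single identifications $T_1$ ($u_8\equiv z$) and $T_2$ ($u_9\equiv u_4$), and one of the two edges must lie in no $8^-$-cycle of $G-S$; the latter holds for $zu_4$ because in $G-S$ the only remaining neighbors of $u_4$ are $y$ and $z$, so any short cycle through $zu_4$ passes through $y$, and rerouting it along the two 3-faces at $u_4$ produces a 4- or 6-cycle or a special 9-cycle of $G$. Since you defer both the verification of $(b)$ (which you rightly call the main obstacle) and the recoloring, and since the one-identification mechanism you do commit to breaks at the recoloring step, the proposal is missing the key idea of the paper's argument rather than merely its routine details.
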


\begin{proof}
Suppose to the contrary $G$ has such a 9-face $f$.
$G$ has four 3-faces $[xu_1u_2]$, $[yu_3u_4]$, $[zu_4u_5]$, $[wu_6u_7]$ adjacent to $f$.
Let $S=\{u_1,u_2,u_3,u_5,u_6,u_7\}$.
We apply following graph operation $T$ on $G$ to obtain a plane graph $G'$ with fewer vertices: delete all vertices of $S$ and identify two edges $u_8u_9$ and $zu_4$ so that $u_8$ is identified with $z$.
Denote by $T_1$ (or $T_2$) the graph operation on $G$ that consists of deleting all vertices in $S$ and identifying $u_8$ and $z$ (or $u_9$ and $u_4$).
Similarly as the proof of Lemma \ref{lem_good path}, one can conclude that items $(a)$ and $(b)$ hold for both $T_1$ and $T_2$.
Besides, $u_4z$ is contained in no $8^-$-cycle of $G-S$. Hence, $\phi$ can be extended to $G'$ by Lemma \ref{operation}.
Furthermore, we can extend $\phi$ from $G'$ to $G$ in a similar way as Lemma \ref{lem_good path}.

\end{proof}

\subsection{Discharging in $G$}
\label{secch}
Let $V=V(G)$, $E=E(G)$, and $F$ be the set of faces of $G$.
Denote by $f_0$ the exterior face of $G$.
Give initial charge $ch(x)$ to each element $x$ of $V\cup F$, where $ch(f_0)=d(f_0)+4$, $ch(v)=d(v)-4$ for $v\in V$, and $ch(f)=|f|-4$ for $f\in F\setminus \{f_0\}$.
Discharge the elements of $V\cup F$ according to the following rules:

\begin{enumerate}[$R1.$]
  \item Every 3-face receives $\frac{1}{3}$ from each incident vertex.
  \item Let $v$ be an internal 3-vertex and $f$ be a face containing $v$.
  \begin{enumerate}[(1)]
    \item Vertex $v$ receives $\frac{1}{4}$ from $f$ if $d(f)=5$.
    \item Suppose $d(f)\geq7$. Let $a$ and $b$ denote the lengths of two faces containing $v$ other than $f$, and $a\leq b$. Vertex $v$ receives from $f$ charge $\frac{2}{3}$ if $a=3$, charge $\frac{1}{2}$ if $a=b=5$, charge $\frac{3}{8}$ if $a=5$ and $b\geq7$, and charge $\frac{1}{3}$ if $a\geq 7$.
  \end{enumerate}
  \item Let $v$ be an internal 4-vertex and $f$ be a $7^+$-face containing $v$.
  \begin{enumerate}[(1)]
    \item If $v$ is incident with precisely two 3-faces, then $v$ receives $\frac{1}{3}$ from $f$.
    \item If $v$ is incident with precisely one 3-face that is adjacent to $f$, then $v$ receives $\frac{1}{6}$ from $f$.
  \end{enumerate}
    \item Let $f$ be a light 7-face adjacent to a 3-face $T$ on edge $xy$, $z$ be the vertex on $T$ other than $x$ and $y$, and $h$ be the face containing edge $yz$ other than $T$.
  \begin{enumerate}[(1)]
    \item If $d(x)=3$ and $d(y)\geq 5$, then $y$ sends $\frac{1}{24}$ to $f$.
    \item If $z\in V(D)$, then $z$ sends $\frac{5}{24}$ to $f$ through $T$.
    \item If $d(x)=3, d(y)=4, z\notin V(D)$ and $d(z)\geq4$, then $h$ sends $\frac{5}{24}$ to $f$ through $y$.
  \end{enumerate}
  \item The exterior face $f_0$ sends $\frac{4}{3}$ to each incident vertex.
  \item Let $v$ be an external vertex and $f$ be a $5^+$-face containing $v$ other than $f_0$.
  \begin{enumerate}[(1)]
    \item If $d(v)=2$, then $v$ receives $\frac{2}{3}$ from $f$.
    \item Suppose $d(v)=3$. If $v$ is triangular, then $v$ receives $\frac{1}{12}$ from $f$; otherwise, $v$ sends $\frac{1}{12}$ to $f$.
    \item If $d(v)\geq4$, then $v$ sends $\frac{1}{3}$ to $f$.
  \end{enumerate}
\end{enumerate}

Let $ch^*(x)$ denote the final charge of each element $x$ of $V\cup F$ after discharging.
On one hand, by Euler's formula we deduce $\sum\limits_{x\in V\cup F}ch(x)=0.$
Since the sum of charge over all elements of $V\cup F$ is unchanged, we have $\sum\limits_{x\in V\cup F}ch^*(x)=0.$ On the other hand, we show that $ch^*(x)\geq 0$ for $x\in V\cup F$ and  $ch^*(x_0)> 0$ for some vertex $x_0$. Hence, this obvious contradiction completes the proof of Theorem \ref{thm46s9}.

It remains to show that $ch^*(x)\geq 0$ for $x\in V\cup F$ and  $ch^*(x_0)> 0$ for some vertex $x_0$.
\begin{claim}
$ch^*(f)\geq0$ for $f\in F$.
\end{claim}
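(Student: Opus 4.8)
The plan is to verify $ch^*(f)\ge 0$ one face-size at a time: I would dispose quickly of the exterior face, the $3$- and $5$-faces and the large faces, and then concentrate the work on the range $7\le |f|\le 11$, where the initial charge $|f|-4$ is smaller than the total a face could a priori give away.

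\emph{Cheap and large faces.} The exterior face receives nothing and pays $\tfrac43$ to each of its $|D|$ vertices by $R5$, so $ch^*(f_0)=|D|+4-\tfrac43|D|=4-\tfrac13|D|\ge 0$ because $D$ is a good cycle and hence $|D|\le 12$. A $3$-face starts at $-1$, is handed $\tfrac13$ by each of its three vertices under $R1$, and is only a conduit in $R4(2)$, so $ch^*=0$. For an interior $5$-face the single outgoing rule is $R2(1)$, worth $\tfrac14$ per incident internal $3$-vertex; Lemma~\ref{lem_face all 3} forbids five such vertices, so at most four draw and $ch^*\ge 1-4\cdot\tfrac14=0$. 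A $5$-face meeting $D$ I would settle by hand through $R5$--$R6$, where external $4^+$- and nontriangular $3$-vertices feed charge into the face and the only real threat --- a $5$-face carrying two external $2$-vertices --- is removed by the splitting structure of Lemma~\ref{lem_splitting path}. Finally, no rule ever moves more than $\tfrac23$ out of a face per incident vertex: even an $R4(3)$ transfer of $\tfrac5{24}$ passes through a $4$-vertex whose own $R3$-draw then totals at most $\tfrac{13}{24}<\tfrac23$. Hence any $k$-face with $k\ge 12$ loses at most $\tfrac23 k\le k-4$ and is safe.

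\emph{The core range $7\le |f|\le 11$.} I would split $f$ according to whether it is triangular. If $f$ is non-triangular, then the rate $\tfrac23$ (which needs a $3$-face adjacent to $f$ at the drawing vertex) never occurs, a $4$-vertex draws $0$ (two incident $3$-faces would make $f$ triangular), and each internal $3$-vertex draws at most $\tfrac12$; this gives $\le \tfrac{k}{2}\le k-4$ at once for $k\ge 8$, while $k=7$ is closed by Lemma~\ref{lem_face all 3}. If $f$ is triangular, the only $\tfrac23$-draws sit at endpoints of triangular edges, and Lemma~\ref{lem_good path} forbids four consecutive internal $3$-vertices bounded by a triangular end-edge; this caps how many $\tfrac23$-draws occur and how tightly they cluster. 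For $k=10,11$ the slack $k-4-\tfrac{k}{2}$ comfortably absorbs the few excess draws this permits, and for the genuinely tight $k=8$ and $k=9$ the surviving extremal profiles are exactly the chorded $8$-cycle and the saturated $9$-face excluded by Lemmas~\ref{lem_theta} and~\ref{lem_9-face}. Carrying out this weight count edge-by-edge is the longest routine part.

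\emph{The main obstacle.} The delicate case is the triangular $7$-face, and in particular the light $7$-face, whose interior draws can consume its entire charge $3$ and leave a deficit that the import rule $R4$ must repair. Two things then need checking. First, that the face $h$ asked to pay in $R4(3)$ is not driven negative; this is the easy half, since the $\tfrac5{24}$ it sends through a $4$-vertex combines with that vertex's $R3$-draw to stay below $\tfrac23$, so $h$ still loses at most $\tfrac23$ per incident vertex. Second, and harder, that after its guaranteed $R4$ income a light $7$-face is itself nonnegative: one must show the total interior draw never exceeds $3$ plus the $R4$ charge it actually imports (at most $\tfrac5{24}$). This is where the configuration bans enter --- Lemma~\ref{lem_two light 7-face} kills two interlocking light $7$-faces that would compete for the same donor, and Lemmas~\ref{lem_theta} and~\ref{lem_9-face} remove the remaining overloaded neighbourhoods. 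The step I expect to be most error-prone is pinning down this $\tfrac5{24}$ bookkeeping across every admissible degree-and-triangle pattern of the five non-triangle vertices of a light $7$-face, including the sub-case where none of $R4(1)$--$R4(3)$ fires and the face must already be nonnegative on its own.
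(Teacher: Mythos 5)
Your plan follows the paper's proof in outline---the same size-by-size audit of faces, the same per-vertex cap of $\frac23$ disposing of $|f|\geq 12$, the same identification of the light $7$-face as the crux with the exact balance $3+\frac{5}{24}=\frac23+\frac16+\frac38+4\times\frac12$, and the correct roles for Lemmas \ref{lem_face all 3}, \ref{lem_good path} and \ref{lem_two light 7-face}---but it has a genuine coverage gap: faces of size $7$ through $11$ that contain external vertices. Your core-range arithmetic prices only internal draws (the rates $\frac12$ and $\frac23$ of $R2$, and the $R3$/$R4$ amounts), yet by $R6(1)$ such a face must pay $\frac23$ to every incident $2$-vertex, so for instance an external $9$-face could a priori owe up to $\frac23\times 9=6$ against its charge $5$, and neither your triangular/non-triangular dichotomy nor your $\frac23$-cap (which only helps for $|f|\geq 12$) closes this. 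The paper devotes the entire second half of the claim's proof to these faces, using Lemma \ref{lem_splitting path} to force the external vertices on an $8^-$-face to be consecutive and to bound the number of $2$-vertices (at most one on a $5$-face, at most two on a $7$- or $8$-face), and balancing the deficit with the income rules $R6(2)$--$(3)$ from external $3^+$-vertices. You deploy exactly this mechanism, but only for the $5$-face; without carrying it through $7\leq|f|\leq 11$ the claim is not proved.

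You also misplace where the named configurations actually bite. An internal $8$-face is never tight: it cannot be adjacent to any $3$-face, since a triangle sharing an edge with an $8$-face yields a $9$-cycle with a $(3,8)$-chord, i.e.\ a special $9$-cycle excluded from $\cal{G}$; hence every incident vertex draws at most $\frac12$ and $ch^*(f)\geq 8-4-8\times\frac12=0$ with no configuration lemma needed. Lemma \ref{lem_theta} is instead consumed \emph{inside} the triangular $7$-face case---its chorded $8$-cycle is precisely the union of a light $7$-face with its triangle---where it eliminates the subcase $d(x)=3$, $d(y)=4$ with apex $z$ an internal $3$-vertex, which is exactly the situation in which none of $R4(1)$--$(3)$ fires and the face would otherwise end at $3-\frac23-\frac16-\frac38-4\times\frac12<0$. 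Likewise your one ``saturated $9$-face'' understates the $|f|=9$ work: the paper's engine there is the partition into $A(f),B(f),C(f),D(f)$ with inequality $(\ast)$, and Lemma \ref{lem_9-face} is invoked twice, for two different separator patterns ($|A(f)|=2$ with the bad vertices split $3+4$, and $|A(f)|=1$, $|B(f)|=2$ with split $3+3$), each time coupled with a degree argument read off the drawing of the adjacent $3$-faces. Your slack estimates for $k=10,11$ are sound (blocks of bad vertices have length at most three by Lemma \ref{lem_good path}), but $k=9$ genuinely needs this finer bookkeeping, so as written the plan would fail at external mid-size faces and would stall at $|f|=9$ even in the internal case.
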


Denote by $V(f)$ the set of vertices of $f$.

First suppose that $f$ contains no external vertex.

Let $|f|=3$. By $R1$, we have $ch^{*}(f)=|f|-4+3\times \frac{1}{3}=0$.

Let $|f|=5$. Lemma \ref{lem_face all 3} implies that $f$ contains at most four 3-vertices. Hence, we have $ch^*(f)\geq|f|-4-4\times \frac{1}{4}=0$ by $R2(1)$.

Let $|f|=7$. If $G$ has no 3-face adjacent to $f$, then $f$ sends at most $\frac{1}{2}$ to each incident 3-vertex by $R2(2)$. Since Lemma \ref{lem_face all 3} implies that $f$ contains at most six 3-vertices, we have $ch^*(f)\geq|f|-4-6\times \frac{1}{2}=0.$ Hence, we may assume that $f$ is adjacent to a 3-face $T=[xyz]$ on edge $xy$, where $d(x)\leq d(y).$ Since $G$ has no special 9-cycle, $f$ is adjacent to no other 3-face than $T$.
Notice that now only rules $R2(2), R3(2)$ and $R4(3)$ might make $f$ send charge out.

Suppose $d(y)=3$. In this case $f$ sends $\frac{2}{3}$ to both $x$ and $y$, and at most $\frac{1}{2}$ to each of other incident 3-vertices.
Moreover, it follows from Lemma \ref{lem_good path} that $f$ contains at least two $4^+$-vertices. Hence, we have $ch^*(f)\geq |f|-4-2\times \frac{2}{3}-3\times \frac{1}{2}>0.$


Suppose $d(x)=3$ and $d(y)=4$. In this case $f$ sends $\frac{2}{3}$ to $x$, at most $\frac{1}{6}$ to $y$, and at most $\frac{3}{8}$ to the neighbor of $x$ on $f$ other than $y$.
If $z$ is not an internal 3-vertex, then $f$ receives charge $\frac{5}{24}$ either from $z$ by $R6(3)$ or from the face containing $yz$ other than $T$ by $R4(3)$, yielding $ch^*(f)\geq |f|-4-\frac{2}{3}-\frac{1}{6}-\frac{3}{8}-4\times \frac{1}{2}+\frac{5}{24}=0$.
Hence, we may assume $z$ is an internal 3-vertex.
Since Lemma \ref{lem_theta} implies $f$ is not light, we have $ch^*(f)\geq |f|-4-\frac{2}{3}-\frac{1}{6}-4\times \frac{1}{2}>0$.


It remains to suppose $d(x)\geq 4$. In this case, $f$ might send charge out through $x$ and $y$ by $R4(3)$.
If $f$ is not light, then $ch^*(f)\geq |f|-4-2(\frac{1}{6}+\frac{5}{24})-4\times \frac{1}{2}>0$.
If $d(y)\geq 5$, then $f$ sends nothing to $y$ or through $y$, yielding $ch^*(f)\geq |f|-4-(\frac{1}{6}+\frac{5}{24})-5\times \frac{1}{2}>0$.
Hence, we may assume that $f$ is light and $d(x)=d(y)=4$. Lemma \ref{lem_two light 7-face} implies that $f$ sends nothing out through $x$ or $y$.
It follows that $ch^*(f)\geq 7-4-2\times \frac{1}{6}-5\times \frac{1}{2}> 0.$


Let $|f|=8$. Since $f$ sends at most $\frac{1}{2}$ to each incident vertex by $R2(2)$ , we have $ch^*(f)\geq 8-4-8\times \frac{1}{2}=0$.

Let $|f|\geq 9$. We define
\begin{align*}
&A(f)=\{v\colon\ \textit{uvw is a path on f, both u and w are bad, and v is good}\},\\
&B(f)=\{v\colon\ \textit{uvw is a path on f, u is bad, and both v and w are good}\},\\
&C(f)=\{v\colon\ \textit{uvw is a path on f, and all of u, v and w are good}\},\\
&D(f)=\{v\colon\ \textit{v is a bad vertex on f}\}.
\end{align*}
Clearly, $A(f),B(f),C(f)$ and $D(f)$ are pairwise disjoint sets whose union is $V(f)$.
By our rules, $f$ sends at most $\frac{1}{3}$ to each vertex in $A(f)$,
at most $\frac{3}{8}$ in total to and through each vertex in $B(f)$,
at most $\frac{1}{2}$ in total to and through each vertex in $C(f)$ and
$\frac{2}{3}$ to each vertex in $D(f)$.
Hence, we have
\begin{align*}
ch^*(f)&\geq |f|-4-\frac{1}{3}|A(f)|-\frac{3}{8}|B(f)|-\frac{1}{2}|C(f)|-\frac{2}{3}(|f|-|A(f)|-|B(f)|-|C(f)|)\\
&=\frac{1}{3}|A(f)|+\frac{7}{24}|B(f)|+\frac{1}{6}|C(f)|+\frac{1}{3}|f|-4.\tag{$\ast$}
\end{align*}
Clearly, $|B(f)|$ is always even, and if $B(f)=\emptyset$, then either $C(f)=\emptyset$ or $C(f)=V(f)$.
Also note that $f$ sends nothing through a vertex $u$ of $f$ if $f$ has a vertex $v$ such that $uv$ is a common edge of $f$ and a 3-face of $G$.

Suppose $|f|=9$. By inequality ($\ast$), it suffices to consider following three cases.

Case 1: $|A(f)|\leq 2$ and $|B(f)|=|C(f)|=0$. By Lemma \ref{lem_good path}, we have $|A(f)|=2$ (say $A(f)=\{u,v\}$), $D(f)$ is divided by $u$ and $v$ as 3+4 on $f$, and $d(u),d(v)\geq 4$. Through the drawing of 3-faces adjacent to $f$, one can find that Lemma \ref{lem_9-face} implies that not both $u$ and $v$ have degree 4. Hence, we have $ch^*(f)\geq |f|-4-7\times \frac{2}{3}-\frac{1}{3}=0.$

Case 2: $|A(f)|=1, |B(f)|=2$ and $|C(f)|=0$. By Lemma \ref{lem_good path}, $D(f)$ is divided by $B(f)\cup A(f)$ as 3+3 or 2+4 on $f$.

In the former case 3+3, let $A(f)=\{u\}$. By Lemma \ref{lem_9-face}, $u$ is not a 4-vertex incident with two 3-faces, and thus receives at most $\frac{1}{6}$ from $f$. Hence, we have $ch^*(f)\geq |f|-4-6\times \frac{2}{3}-2\times \frac{3}{8}-\frac{1}{6}>0.$

In the latter case 2+4, let $f=[u_1\ldots u_9]$, $u_1\in A(f)$, and $u_4,u_5\in B(f)$. Lemma \ref{lem_good path} implies $d(u_1),d(u_5)\geq 4$.
Furthermore, $u_1$ is a 4-vertex incident with two 3-faces, since otherwise $f$ sends at most $\frac{1}{6}$ to $u_1$ so that $ch^*(f)\geq |f|-4-6\times \frac{2}{3}-2\times \frac{3}{8}-\frac{1}{6}>0.$ Through the drawing of 3-faces adjacent to $f$, one can find that $d(u_4)\geq 4$. Hence, $f$ sends nothing through $u_4$ and $u_5$, and at most $\frac{1}{3}$ to each of them, yielding $ch^*(f)\geq |f|-4-6\times \frac{2}{3}-3\times \frac{1}{3}=0.$

Case 3: $|A(f)|=0, |B(f)|=2$ and $|C(f)|\leq 2$. It follows that $f$ contains five consecutive bad vertices, and hence has a good path, contradicting Lemma \ref{lem_good path}.

Suppose $|f|\geq 10$.
If $|A(f)|+\frac{|B(f)|}{2}\geq 2$, then by inequality ($\ast$) we are done.
Hence, we may assume either $|A(f)|\leq 1$ and $|B(f)|=0$, or $|A(f)|=0$ and $|B(f)|=2$.
Lemma \ref{lem_good path} implies a contradiction in the former case, and $|C(f)|\geq 4$ in the latter case.
Hence, by inequality ($\ast$) we are also done in the latter case.

Next suppose $f$ contains external vertices. Since $|f_0|\leq 12$, if $f=f_0$ then by $R5$ we have $ch^*(f)=|f_0|+4-|f_0|\times \frac{4}{3}\geq0.$
Hence, we may assume $f\neq f_0.$ By our rules, $f$ sends at most $\frac{2}{3}$ to each incident vertex.
Lemma \ref{lem_splitting path} implies that if $|f|\leq 8$, then the external vertices on $f$ are consecutive one by one. Furthermore, $f$ has at most one 2-vertex if $|f|=5$, and has at most two 2-vertices if $|f|\in \{7, 8\}$.

Let $|f|=3$. We have $ch^{*}(f)=|f|-4+3\times \frac{1}{3}=0$ by $R1$.

Let $|f|=5$. If $f$ has no 2-vertex, then $f$ sends at most $\frac{1}{4}$ to each vertex, yielding $ch^*(f)\geq |f|-4-4\times \frac{1}{4}=0$.
Hence, we may assume $f$ has precisely one $2$-vertex. It follows that $f$ has two external 3-vertices, both of which send at least $\frac{1}{12}$ to $f$ by $R6$. Hence, we have $ch^*(f)\geq |f|-4-\frac{2}{3}+2\times \frac{1}{12}-2\times \frac{1}{4}=0$.

Let $|f|=7$. Since in this case $f$ is adjacent to at most one 3-face, $f$ has an internal vertex that is not bad. By our rules, $f$ sends at most $\frac{1}{2}$ to this vertex.
If $f$ has an external $4^+$-vertex, then $f$ receives $\frac{1}{3}$ from this vertex by $R6(3)$, yielding $ch^*(f)\geq |f|-4+\frac{1}{3}-4\times \frac{2}{3}-\frac{1}{2}>0$.
Hence, we may assume that $f$ has no external $4^+$-vertex, which implies $f$ has two external 3-vertices $u$ and $v$.
If both of $u$ and $v$ are not triangular and thus send $\frac{1}{12}$ to $f$, then we have $ch^*(f)\geq |f|-4+2\times \frac{1}{12}-4\times \frac{2}{3}-\frac{1}{2}=0$.
Hence, we may assume that $u$ is triangular but $v$ not. Now $f$ has at most one bad vertex, yielding $ch^*(f)\geq |f|-4+\frac{1}{12}-\frac{1}{12}-3\times \frac{2}{3}-2\times \frac{1}{2}=0$.

Let $|f|=8$.
If $f$ has no 2-vertex, then $f$ sends at most $\frac{1}{2}$ to each incident vertex, yielding $ch^*(f)\geq |f|-4-8\times \frac{1}{2}=0$.
Hence, we may assume that $f$ has precisely one or two $2$-vertices. It follows that $f$ has two external $3^+$-vertices, both of which send at least $\frac{1}{12}$ to $f$. Hence we have $ch^*(f)\geq |f|-4-2\times \frac{2}{3}+2\times \frac{1}{12}-4\times \frac{1}{2}>0$.

It remains to suppose $|f|\geq 9$. If $f$ has an external $4^+$-vertex, then $f$ receives $\frac{1}{3}$ from this vertex by $R6(3)$, yielding $ch^*(f)\geq |f|-4+\frac{1}{3}-(|f|-1)\times \frac{2}{3}\geq 0$.
Hence, we may assume that $f$ has no external $4^+$-vertex, which implies $f$ has at least two external 3-vertices. By $R6$, we have $ch^*(f)\geq |f|-4-2\times \frac{1}{12}-(|f|-2)\times \frac{2}{3}> 0$.

\begin{claim}
$ch^{*}(v)\geq0$ for $v\in V$.
\end{claim}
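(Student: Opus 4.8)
The plan is to verify $ch^*(v)\ge 0$ by a case analysis on whether $v$ is internal or external and on $d(v)$, discarding internal vertices of degree at most $2$ by Lemma \ref{lem_min degree}. Throughout I would exploit two consequences of the forbidden cycles: a $3$-face can share an edge neither with another $3$-face (a $4$-cycle) nor with a $5$-face (a $6$-cycle), so \emph{every face adjacent to a $3$-face is a $7^+$-face}; and a $3$-vertex cannot be incident with three $5$-faces, since they would form a $(5,5,5)$-claw of a $9$-cycle, i.e.\ a special $9$-cycle, forbidden in $G$. For an internal $3$-vertex I split into \emph{bad} (triangular) and \emph{good} (non-triangular). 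A bad vertex is incident with exactly one $3$-face $T$ and two $7^+$-faces, both adjacent to $T$; it pays $\frac13$ to $T$ by $R1$ and collects $\frac23$ from each $7^+$-face by $R2(2)$ (case $a=3$), so $ch^*(v)=-1-\frac13+\frac43=0$. A good internal $3$-vertex meets no $3$-face and at most two $5$-faces; the three possibilities (two, one, or zero $5$-faces) give, by $R2(1)$ and $R2(2)$, the totals $\frac14+\frac14+\frac12$, $\frac14+\frac38+\frac38$, or $\frac13+\frac13+\frac13$, each equal to $1$, so $ch^*(v)=0$.

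For internal $4$-vertices (initial charge $0$) I would argue by the number of incident $3$-faces, which is $0$, $1$, or $2$, since two $3$-faces at a vertex must be opposite. With two $3$-faces $v$ pays $\frac23$ by $R1$ and regains $\frac13$ from each of the two surrounding $7^+$-faces by $R3(1)$; with one it pays $\frac13$ and regains $\frac16$ from each of the two adjacent $7^+$-faces by $R3(2)$; with none it neither pays nor receives. In every case $ch^*(v)=0$, and $v$ never loses through $R4$, whose clauses require an external apex or a $5^+$-vertex. An internal $5^+$-vertex only loses charge, at most $\frac13\lfloor d/2\rfloor$ by $R1$ and at most $\frac1{24}$ per triangular edge by $R4(1)$, and the bound $d-4\ge \frac13\lfloor d/2\rfloor+\frac{d}{24}$ holds for $d\ge 5$, giving $ch^*(v)\ge 0$.

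External vertices receive $\frac43$ from $f_0$ by $R5$. A $2$-vertex on $D$ has no incident $3$-face, because its two neighbours are consecutive on the chordless cycle $D$; hence its unique interior face is a $5^+$-face contributing $\frac23$ by $R6(1)$ and $ch^*(v)=-2+\frac43+\frac23=0$. An external $3$-vertex is either triangular (one $3$-face, one $7^+$-face, receiving $\frac1{12}$ by $R6(2)$, final $\frac1{12}$) or non-triangular (paying $\frac1{12}$ to each of two $5^+$-faces, final $\frac16$). An external $d$-vertex with $d\ge 4$ is incident with $f_0$ and $d-1$ interior faces, to each of which it pays $\frac13$ by $R1$ or $R6(3)$, leaving a reserve of $\frac{2d-7}{3}$ before $R4$; these vertices also supply the strictly positive $x_0$ needed to complete the contradiction.

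I expect the main obstacle to be the $R4$ bookkeeping for external $4^+$-vertices, and in particular clause $R4(2)$, where an external apex $z$ of a $3$-face $T=[xyz]$ sends $\frac5{24}$ to an adjacent light $7$-face $f$. The key observation is that a light $7$-face has no external vertex, so $x,y\in V(f)$ are internal and both $zx,zy$ are inward edges at $z$; consequently $z$ can serve as such an apex at most $\lfloor (d-2)/2\rfloor$ times. Combining this with the $R4(1)$ losses against the reserve $\frac{2d-7}{3}$ yields $ch^*(v)\ge 0$ for every $d\ge 4$; the tightest subcase $d=4$ (reserve $\frac13$, at most one $R4(2)$ loss of $\frac5{24}$, no $R4(1)$ loss) still leaves $\frac18>0$. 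Verifying this clause by clause, and confirming that charge routed \emph{through} a vertex by $R4(3)$ leaves that vertex's charge unchanged, will be the most delicate part of the argument.
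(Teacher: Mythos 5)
Your proposal is correct and takes essentially the same approach as the paper's proof: the same case split on internal versus external vertices and on degree, the same rule-by-rule bookkeeping (including the key observations that a light $7$-face has no external vertex, that charge moved by $R4(3)$ only passes \emph{through} a vertex, and that $R4(2)$ apex occurrences are bounded by $\frac{d-2}{2}$ via edge-disjointness of $3$-faces at a vertex), and the same tight case, an external $4$-vertex retaining $\frac{1}{3}-\frac{5}{24}=\frac{1}{8}>0$. Where you differ it is only in being slightly more explicit than the paper (e.g., enumerating the face-length patterns at internal $3$-vertices via the excluded $(5,5,5)$-claw, which the paper compresses into one line), so no gap or genuinely new route arises.
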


First suppose that $v$ is internal. We have $d(v)\geq3$ by Lemma \ref{lem_min degree}.

Let $d(v)=3$. Since $G\in \cal{G}$, the set of lengths of the faces containing $v$ is one of the followings: $\{3,7^+,7^+\},\{5,5,7^+\},\{5,7^+,7^+\}$ and $\{7^+,7^+,7^+\}$. Hence, we are done in each case by $R1$ and $R2$.

If $d(v)=4,$ then by $R1$ and $R3$ the charge $v$ sends out equals to the charge $v$ receives, yielding that $ch^*(v)=d(v)-4=0.$

It remains to suppose $d(v)\geq 5$. By $R1$ and $R4(1)$, $v$ sends $\frac{1}{3}$ to each incident 3-face and at most $\frac{1}{24}$ to each other incident face, which gives $ch^*(v)> d(v)-4-\frac{d(v)}{2}\times \frac{1}{3}-\frac{d(v)}{2}\times \frac{1}{24}>0$.

Next suppose that $v$ is external. Clearly, $d(v)\geq 2$.

By $R1$, $R5$ and $R6$, we have $ch^*(v)=d(v)-4+\frac{4}{3}+\frac{2}{3}=0$ if $d(v)=2$, $ch^*(v)=d(v)-4+\frac{4}{3}-\frac{1}{3}+\frac{1}{12}>0$ if $d(v)=3$ and $v$ is triangular, and $ch^*(v)=d(v)-4+\frac{4}{3}-\frac{1}{12}-\frac{1}{12}>0$ if $d(v)=3$ and $v$ is not triangular.

It remains to suppose $d(v)\geq 4$. Then $v$ receives $\frac{4}{3}$ from $f_0$ by $R5$, sends $\frac{1}{3}$ to each other incident face by $R1$ and $R6(3)$, and might sends $\frac{5}{24}$ through each incident 3-face whose other two vertices are internal. It follows that $ch^*(v)\geq d(v)-4+\frac{4}{3}-(d(v)-1)\times \frac{1}{3}-\frac{d(v)-2}{2}\times \frac{5}{24}>0$.

\begin{claim}
$D$ contains a vertex $x_0$ such that $ch^*(x_0) >0.$
\end{claim}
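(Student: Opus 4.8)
The plan is to locate the required vertex among the external vertices, reusing the charge computations already carried out in the proof of the claim that $ch^*(v)\geq 0$ for $v\in V$. There the external case splits by degree: an external vertex $v$ has $ch^*(v)=0$ exactly when $d(v)=2$, while the three displayed computations for a triangular $3$-vertex, a non-triangular $3$-vertex, and a $4^+$-vertex each yield a strictly positive value, so $ch^*(v)>0$ whenever $d(v)\geq 3$. Since $G$ is $2$-connected, $D$ is a cycle bounding the exterior face, and the external vertices are precisely the vertices of $D$. Hence it suffices to show that $D$ contains a vertex of degree at least $3$; any such vertex can be taken as $x_0$.

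To establish the existence of a $3^+$-vertex on $D$, I would argue by contradiction and assume that every vertex of $D$ has degree exactly $2$. Recall that by the minimality of $G$ the cycle $D$ has no chord, so every edge incident with a vertex of $D$ is either an edge of $D$ or joins $D$ to an internal vertex. If all vertices of $D$ had degree $2$, then no edge of the second type could exist, and by connectedness $G$ would have no internal vertices at all, giving $V(G)=V(D)$ and, together with the absence of chords, $G=D$. But then $G[V(D)]=G$, so the prescribed coloring $\phi$ is already a proper $3$-coloring of all of $G$ and extends trivially, contradicting the choice of $G$ as a counterexample to conclusion $(2)$. Therefore $D$ carries a vertex $x_0$ with $d(x_0)\geq 3$, and by the computation recalled above $ch^*(x_0)>0$.

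With all three claims in hand the proof of the theorem closes immediately: Euler's formula gives $\sum_{x\in V\cup F}ch^*(x)=0$, yet the first two claims yield $ch^*(x)\geq 0$ for every $x\in V\cup F$ while this claim supplies a vertex $x_0$ with $ch^*(x_0)>0$, so the total final charge is strictly positive, the desired contradiction.

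I expect no genuine obstacle here; the whole argument is short and leans entirely on the already-completed vertex discharging. The one point that must not be overlooked is the degenerate possibility $G=D$, which has to be ruled out before a degree-$3^+$ vertex on $D$ is guaranteed, and this is handled by the trivial observation that a bare good cycle cannot be a counterexample to the coloring-extension statement.
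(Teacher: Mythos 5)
Your proposal is correct and matches the paper's approach: the paper's entire proof is to take $x_0$ to be any $3^+$-vertex on $D$, relying on the earlier external-vertex computations showing $ch^*(v)>0$ whenever $d(v)\geq 3$. Your additional argument ruling out the degenerate case $G=D$ (all of $D$ having degree $2$) is a detail the paper leaves implicit, and your justification of it is sound.
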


Let $x_0$ be any $3^+$-vertex on $D$, as desired.

The proof of Theorem \ref{thm46s9} is completed.


\begin{thebibliography}{99}
\bibitem{AbbottZhou1991203} {\sc H. L. Abbott and B. Zhou},
	    On small faces in 4-critical graphs,
        Ars Combin. {\bf 32} (1991) 203 - 207
        
\bibitem{Borodin1996183} {\sc O. V. Borodin},
	    Structural properties of plane graphs without adjacent triangles and an application to 3-colorings,
        J. Graph Theory {\bf 21} (1996) 183 - 186  
        
\bibitem{BorodinEtc2005303} {\sc O. V. Borodin, A. N. Glebov, A. Raspaud and M. R. Salavatipour},
	    Planar graphs without cycles of length from 4 to 7 are 3-colorable,
        J. Combin. Theory Ser. B {\bf 93} (2005) 303 - 311      
        
\bibitem{BorodinEtc2009668} {\sc O. V. Borodin, A. N. Glebov, M. Montassier and A. Raspaud},
	    Planar graphs without 5- and 7-cycles without adjacent triangles are 3-colorable,
        J. Combin. Theory Ser. B {\bf 99} (2009) 668 - 673  
        
\bibitem{BorodinEtc20102584} {\sc O. V. Borodin, A. N. Glebov and A. Raspaud},
	    Planar graphs without triangles adjacent to cycles of length from 4 to 7 are 3-colorable,
        Discrete Math. {\bf 310} (2010) 2584 - 2594    

\bibitem{Grotzsch1959109} {\sc H. Gr\"{o}tzsch},
	    Ein \textsc{D}reifarbensatz f\"{u}r dreikreisfreie \textsc{N}etze auf der \textsc{K}ugel,
        Wiss. Z. Martin Luther Univ. Halle Wittenberg, Math.-Nat. Reihe {\bf 8} (1959) 109 - 120
        
\bibitem{LuEtc20094596} {\sc H. Lu, Y. Wang, W. Wang, Y. Bu, M. Montassier and A. Raspaud},
	    On the 3-colorability of planar graphs without 4-, 7- and 9-cycles,
        Discrete Math. {\bf 309} (2009) 4596 - 4607
        
\bibitem{SandersZhao199591} {\sc D. P. Sanders and Y. Zhao},
	    A note on the three color problem,
        Graphs Combin. {\bf 11} (1995) 91 - 94        

\bibitem{Steinberg1993211} {\sc R. Steinberg},
	    The state of the three color problem, in: J. \textsc{G}imbel, \textsc{J}. \textsc{W}. \textsc{K}ennedy \& \textsc{L}. \textsc{V}. \textsc{Q}uintas (eds.), 
        Quo Vadis, Graph Theory? Ann Discrete Math {\bf 55} (1993) 211 - 248
        
\bibitem{WangChen20071552} {\sc W. Wang and M. Chen},
	    Planar graphs without 4, 6, 8-cycles are 3-colorable,
        Science in China Ser. A: Mathematics {\bf 50} (2007) 1552 - 1562  

\bibitem{Xu2006958} {\sc B. Xu},
	    On 3-colorable plane graphs without 5- and 7-cycles,  
        J. Combin. Theory Ser. B {\bf 96} (2006) 958 - 963

\bibitem{Xu2009347} {\sc B. Xu},
	    On 3-colorable plane graphs without 5- and 7-cycles,
        Discrete Mathematics, Algorithms and applications {\bf 1} (2009) 347 - 353
\end{thebibliography}
\end{document}